\theoremstyle{plain}
\newtheorem{thm}{Theorem}[section]
\newtheorem{prop}[thm]{Proposition}
\newtheorem{lem}[thm]{Lemma}
\theoremstyle{definition}
\newtheorem{remark}[thm]{Remark}
\newcommand{\R}{\ensuremath{\mathbb{R}}}
\newcommand{\Z}{\ensuremath{\mathbb{Z}}}
\newcommand{\C}{\ensuremath{\mathbb{C}}}
\def\co{\colon\thinspace}
\def\dfn#1{{\em #1}}
\title{Contact structures on 5--manifolds}
\author{John B. Etnyre}
\address{School of Mathematics \\ Georgia Institute of Technology}
\email{etnyre@math.gatech.edu}
\urladdr{\href{http://www.math.gatech.edu/~etnyre}{http://www.math.gatech.edu/\~{}etnyre}}
\begin{document}

\begin{abstract}
Using recent work on high dimensional Lutz twists and families of
Weinstein structures we show that any almost contact structure on a
5--manifold is homotopic to a contact structure.
\end{abstract}

\maketitle

\section{Introduction}

Though contact structures have been studied for quite some time we
still do not know which odd dimensional manifolds support such
structures. Recall that (oriented) contact structures only exist on
odd dimensional manifolds $M^{2n+1}$ and are described as $\xi=\ker
\alpha$ where $\alpha$ is a 1--form for which $\alpha\wedge
(d\alpha)^n$ is a volume from on $M$. (We will always take $M$
oriented and assume this volume form gives the preferred orientation.)
Noting that $d\alpha$ defines a symplectic form on $\xi$ we see that a
contact structure gives a reduction of the structure group of $TM$ to
$U(n)\times \mathbf{1}$ (more specifically choosing a complex
structure $J$ on $\xi$ that is compatible with $d\alpha$ gives the
reduction and one may easily check this reduction is independent of
the choice of $J$ or $\alpha$).  Any reduction of the structure group
of $TM$ to $U(n)\times \mathbf{1}$, or equivalently a choice of
hyperplane field $\eta$ in $TM$ with a complex structure $J$ on
$\eta$, is called an \dfn{almost contact structure} and the
fundamental existence question in contact geometry concerns whether or
not almost contact structures always come from contact structures.

For open manifolds Gromov \cite{Gromov69} showed that all almost
contact structures are homotopic to contact structures, but on closed
manifolds much less is known. Previously we only had a complete answer
in dimensions 1 and 3. The 1 dimensional result is trivial. In
dimension 3 almost contact structures are equivalent to plane fields
and so all oriented 3--manifolds $M$ admit almost contact
structures. Martinet \cite{Martinet71} showed that all closed oriented
3--manifolds admit a contact structure and Lutz \cite{Lutz71} showed
that every plane field is homotopic to a contact structure. The main
result of this paper is to extend this existence result to
5--manifolds.

\begin{thm}\label{main} 
On any closed oriented 5--manifold any almost contact structure
$(\eta,J)$ is homotopic, through almost contact structures, to a
contact structure $\xi$.
\end{thm}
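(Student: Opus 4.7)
The plan is to proceed in two stages: first produce \emph{some} contact structure $\xi_0$ on the closed oriented $5$-manifold $M$, and then modify it by high-dimensional Lutz twists so that the underlying almost contact structure becomes homotopic to the prescribed $(\eta,J)$.

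For the first stage I would use an open book decomposition of $M$, whose pages $W$ are compact $4$-manifolds with boundary; such a decomposition exists for every closed oriented $5$-manifold by work of Quinn. Appealing to the h-principle for families of Weinstein structures---together with Eliashberg's classical result that an almost complex $4$-manifold built from handles of index $\le 2$ carries a Stein structure---one can equip $W$ with a Weinstein structure compatible with the ambient almost contact data. A homotopy then arranges the monodromy to be a symplectomorphism, and the Giroux--Thurston--Winkelnkemper construction yields a contact structure $\xi_0$ on $M$.

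For the second stage I would compare the almost contact class of $\xi_0$ with $[(\eta,J)]$ using obstruction theory. On a $5$-manifold the difference lives in cohomology groups with coefficients in the homotopy groups of $SO(5)/U(2)$, with the primary class living in $H^2(M;\Z)$. I would realize each such cohomological change by performing a high-dimensional Lutz twist along a $3$-dimensional contact submanifold $\Sigma\subset(M,\xi_0)$; the effect on the almost contact class is expressible in terms of $\mathrm{PD}[\Sigma]$, and enough contact submanifolds can be produced via contact surgery or by attaching Weinstein handles to the page of the open book.

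The main obstacle I expect is this second stage: showing that the changes induced by iterated Lutz twists surject onto the set of differences of almost contact homotopy classes. This splits into a precise computation of the homotopy-class shift produced by a $5$-dimensional Lutz twist---where the recent work on high-dimensional Lutz twists cited in the abstract is crucial---and an argument that contact submanifolds realizing each necessary homology class can be found inside $(M,\xi_0)$. Any secondary obstructions beyond the primary $H^2$ class, for instance in $H^3$ or $H^4$, would need to be handled by supplementary moves such as Lutz twists along submanifolds of different dimensions or a final auxiliary homotopy through almost contact structures.
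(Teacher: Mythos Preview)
Your two-stage strategy---build \emph{some} contact structure, then adjust its almost contact class by Lutz twists---is genuinely different from the paper's route and closer in spirit to the Casals--Pancholi--Presas program mentioned in the introduction. The paper instead first proves (Section~\ref{acs}) that the given $(\eta,J)$ is \emph{supported} by a fixed open book, meaning $\eta$ is homotopic to $H(\xi)$ for an overtwisted $\xi$ on the binding, and then constructs a contact structure directly in that homotopy class; there is no post-hoc adjustment stage at all. What the paper buys is that the only obstruction computation needed is for hyperplane fields on $M$ (Theorem~\ref{getall}), together with the easy Lemma~\ref{lem:homotope} saying almost contact structures on a 5--manifold are determined on the 2--skeleton.

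There are real gaps in both of your stages. In Stage~1, the Cieliebak--Eliashberg Weinstein h-principle in dimension~4 (Theorems~\ref{wct2} and~\ref{cethm2}) requires \emph{flexibility}, which for cobordisms with 2--handles means the concave boundary must be overtwisted. You do not arrange this, and without it you cannot interpolate between the Weinstein structure on the page and its pullback by the monodromy. The paper's key move is precisely to manufacture this overtwistedness: it splits off a $D^4\times S^1$ piece $C'$ and applies the Etnyre--Pancholi Lutz twist (Proposition~\ref{twist}) so that each $\partial D^4\times\{\theta\}$ is overtwisted; only then does the flexibility theorem apply to the remaining mapping torus $C''$. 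A second, equally essential ingredient you omit is how to close the contact structure over the binding neighborhood $N$: the paper uses Eliashberg's Theorem~\ref{otclass} on the homotopy type of overtwisted contact structures to contract the resulting loop $\xi_\theta$ on $Y$, and this step has no analog in your sketch.

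In Stage~2 the difficulty is more fundamental. The high-dimensional Lutz twist of \cite{EtnyrePancholi11} (Lemma~\ref{lem:twist}, Proposition~\ref{twist}) does \emph{not} change the almost contact homotopy class---this is stated explicitly in Proposition~\ref{twist}---so it cannot serve as your adjustment mechanism. You would need some other operation and a proof that it surjects onto $H^2(M;\Z)$, which is exactly the problem the paper sidesteps. (Incidentally, since $SO(5)/U(2)\cong\C P^3$ has $\pi_i=0$ for $3\le i\le 5$, there are no secondary obstructions in $H^3$ or $H^4$; the entire difference class lives in $H^2(M;\Z)$.)
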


There have been many partial results towards this theorem.  The first
breakthrough was due to Geiges \cite{Geiges91} who showed that on
simply connected 5--manifolds any almost contact structure was
homotopic to a contact structure. This result was reproven using open
book decompositions by van Koert in \cite{vanKoert08}. In the papers
\cite{GeigesThomas98} and \cite{GeigesThomas01} Geiges and Thomas
extended the existence results to some 5--manifolds with finite
fundamental group. In \cite{GeigesStipsicz10} Geiges and Stipsicz were
able to prove existence on some other 5--manifolds. The most recent
breakthrough is due to Casals, Pancholi, and Presas
\cite{CaselsPancholiPresas??} who proved that all possible first Chern
classes of almost contact structures can be realized by contact
structures on 5--manifolds. In particular, they established the above
theorem for manifolds without 2 torsion and gave a completely general
existence result (that is any almost contact 5--manifold admits some
contact structure). (Upon receiving a draft of this paper Casals, 
Pancholi, and Presas have informed the author that their arguments in 
\cite{CaselsPancholiPresas??} can be extended to obtain the main 
theorem above.)

The proof of Theorem~\ref{main} involves the use of an open book
decompositions of a manifold $M$. In Section~\ref{acs} we show that
every almost contact structure on a closed oriented 5--manifold $M$
can be ``supported'' by an open book decomposition. The open book
decomposition allows us to write $M$ as a neighborhood $N=Y\times D^2$
of an embedded 3--manifold and its complement $C$ that can be
expressed as a mapping torus of a 4--manifold $X$. We further break
$C$ into two pieces $C'$ and $C''$ where $C'=D^4\times S^1$ and $C''$
is the mapping torus of $\overline{X-D^4}$. We then use a result of
the author and Pancholi \cite{EtnyrePancholi11} to put a contact
structure on $C'$ that induces an overtwisted contact structure on the
boundary of $D^4\times \{\theta\}$ for each $\theta\in S^1$. We then
use work of Cieliebak and Eliashberg \cite{CieliebakEliashberg??} on
Weinstein manifolds to extend this contact structure over
$C''$. The final extension over $N$ relies on Eliashberg's
understanding of overtwisted contact structures on 3--manifolds in
\cite{Eliashberg89}.

We note that this proof is relatively elementary in that the two most
sophisticated tools that it uses are the stable s-cobordism theorem
(when quoting Quinn's result below on the existence of
open book decompositions \cite{Quinn79}) and a symplectic analog of 
the h-cobordism theorem due to Cieliebak and Eliashberg 
\cite{CieliebakEliashberg??}.

We also note that most of the steps in the proof have obvious analogs
in higher dimensions except the last step concerning the extension of
the contact structure over the neighborhood $N$ of the binding $Y$,
which relies heavily on Eliashberg's understanding of overtwisted
contact structures in dimension 3. So one might hope that a similar
approach would yield a general existence result in higher
dimensions. This is essentially a program laid out by Giroux
independently from the work here, though the author does thank Giroux for
comments that simplified the arguments in Section~\ref{acs}.

\begin{remark}
In the proof of Theorem~\ref{main} we make the somewhat surprising
observation that any 5--manifold $M$ has a fixed open book
decomposition that supports (in a weak sense, see
Section~\ref{sec:acsobd}) any almost contact structure on $M$ and more
generally any hyperplane field on $M$ can, in some sense, be thought
of as a perturbation of the pages of the open book. For a more precise
formulation of this fact see Theorem~\ref{getall}. This observation is
also true in higher dimensions but clearly far from true for open book
decompositions of 3--manifolds. The intuitive explanation for this
counterintuitive fact is that the pages of open books in higher
dimensional manifolds have more interesting topology than those of
3--dimensional open books.
\end{remark}

\smallskip
\noindent {\bf Acknowledgments:} The author thanks Kai Cieliebak and
Yasha Eliashberg for providing an advanced copy of
\cite{CieliebakEliashberg??} and especially thanks Cieliebak for
several very helpful discussions concerning Weinstein cobordisms.  He
additionally thanks David Gay, Emmanuel Giroux and Rob Kirby for
useful conversations and correspondence. The author is particularly
grateful to Dishant Pancholi who has made many helpful comments to
improve this paper and helped to identify a potential gap in the original
version of this paper.  This work was partially supported by NSF grant
DMS-0804820.

\section{Preliminary results and recollections}\label{pre}

In this section we first recall the notion of an open book
decomposition for a 5--manifold and discuss a special open book
decomposition on $S^5$. We we then discus of a generalization of Lutz
twists on 5--manifolds and finish by recalling Eliashberg's result on
the space of overtwisted contact structures.

\subsection{Open book decompositions of 5--manifolds}

An \dfn{open book decomposition} of a manifold $M$ is a pair $(Y,\pi)$
where $Y$ is a co-dimension 2 submanifold of $M$ that has a product
neighborhood $N=Y\times D^2$ in $M$ and $\pi\co (M-Y)\to S^1$ is a
fibration such that $\pi|_{N-Y}$ is the projection onto the
$\theta$--coordinate of $D^2$ where we give $D^2$ polar coordinates
$(r,\theta)$. We call $Y$ the \dfn{binding} of the open book
decomposition and $\overline{\pi^{-1}}(\theta)$ a \dfn{page} of the
open book.

There is another useful view of open books decompositions. If we are
given a pair $(X,\phi)$ where $X$ is a compact manifold with boundary
and $\phi\co X\to X$ is a diffeomorphism that agrees with the identity
map near $\partial X$, then we can construct a manifold as follows.
Let $T_\phi$ be the mapping torus of $\phi$, that is $T_\phi=X\times
[0,1]/\sim$ where $(x,1)\sim (\phi(x),0)$ for all $x\in X$. Notice
that $\partial T_\phi= Y\times S^1$ where $Y=\partial X$. We now glue
$Y\times D^2$ to $T_\phi$ so that the product structures on the
boundary agree. This gives a manifold $M_{(X,\phi)}$. We call
$(X,\phi)$ an open book decomposition for a manifold $M$ if $M$ is
diffeomorphic to $M_{(X,\phi)}$. Notice that the image of
$Y\times\{(0,0)\}$ in $M$ is the binding of an open book decomposition
as above and any open book decomposition as above can be constructed
from a pair $(X,\phi)$. We will use these notions interchangeably but
note that when using the second definition of an open book we must
always keep in mind a fixed diffeomorphism between $M$ and
$M_{(X,\phi)}$.

We have the following simple observation that we will use later.
\begin{lem}\label{obdcs} Given open book decompositions $(X_i,\phi_i)$
of $M_i$, $i=1,2$, define $X$ to be the boundary sum $X_1\natural X_2$
and $\phi$ to be the diffeomorphism of $X$ that restricts to $\phi_i$
on $X_i$. Then $M_{(X,\phi)}\cong M_{(X_1,\phi_1)}\#
M_{(X_2,\phi_2)}$.
\end{lem}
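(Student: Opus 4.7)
The plan is to construct an embedded $n$--sphere $\Sigma$ in $M_{(X,\phi)}$ (where $n=\dim X$) that separates the manifold into two pieces, each a punctured copy of the corresponding $M_{(X_i,\phi_i)}$; this exhibits the desired connected sum decomposition directly.

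First I would locate a natural separating disk in the page. By definition $X=X_1\natural X_2$ is built by identifying $(n-1)$--disks $D_i\subset\partial X_i$, so the common disk $D$ is a properly embedded separating disk in $X$ whose two sides recover $X_1$ and $X_2$. Because each $\phi_i$ is the identity near $\partial X_i\supset D_i$, the assembled diffeomorphism $\phi$ is the identity on a neighborhood of $D$, and consequently $D$ sweeps out a cylindrical slab $D\times S^1\subset T_\phi$. Its boundary $\partial D\cong S^{n-2}$ is precisely the seam sphere of the boundary connected sum $\partial X=\partial X_1\#\partial X_2$, and inside the binding neighborhood $\partial X\times D^2$ it comes with the canonical meridional cap $\partial D\times D^2$. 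Gluing these pieces along their common boundary $\partial D\times S^1$ produces
\[
\Sigma \;=\; (D\times S^1)\;\cup_{\partial D\times S^1}\;(\partial D\times D^2),
\]
which is the standard handle decomposition of $\partial(D^{n-1}\times D^2)\cong S^n$.

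The second step is to identify the two sides of $\Sigma$ in $M_{(X,\phi)}$. The slab $D\times S^1$ cuts $T_\phi$ into the mapping tori $T_{\phi_1}\cup T_{\phi_2}$ of the restrictions, and the cap $\partial D\times D^2$ cuts the binding neighborhood into $(\partial X_i\setminus\mathrm{int}\,D_i)\times D^2$ for $i=1,2$. Gluing the $i$th pieces along their shared $(\partial X_i\setminus\mathrm{int}\,D_i)\times S^1$ produces the region $M^i$ on one side of $\Sigma$. Comparing with the open book construction for $M_i$, the manifold $M_{(X_i,\phi_i)}$ is obtained from $M^i$ by gluing on the additional ball $D_i\times D^2\cong D^{n+1}$, whose boundary is exactly $\Sigma$. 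Hence each $M^i$ is $M_{(X_i,\phi_i)}$ with an open ball removed, and so $M_{(X,\phi)}=M^1\cup_\Sigma M^2$ is the connected sum.

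No step here is genuinely hard; the main thing is the bookkeeping that ensures the three natural pieces, namely the slab $D\times S^1$ in the mapping torus, the cap $\partial D\times D^2$ in the binding neighborhood, and the complementary ball $D\times D^2$ on either side, really do fit together as the standard handle decomposition of $S^n$ and of the two punctured summands. The only place one is apt to slip is in keeping straight which copies of $D$ lie in a page versus in the binding neighborhood.
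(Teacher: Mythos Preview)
Your argument is correct: the separating $(n-1)$--disk $D$ in the page, together with its meridional cap $\partial D\times D^2$ in the binding neighborhood, assembles into an $S^n$ that splits $M_{(X,\phi)}$ into the two punctured summands exactly as you describe. The paper in fact states this lemma as a ``simple observation'' and gives no proof at all, so you are supplying precisely the standard argument the author had in mind.
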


\subsection{An open book decomposition for $S^5$}

In this section we discusse a special open book decomposition of $S^5$
that we will need to ``stabilize'' open books of other 5--manifolds to
have nice properties that will be needed later.

\begin{thm}\label{s5ex}
There is an open books $(Y_\text{stab},\pi_\text{stab})$ for $S^5$
with page $X_\text{stab}$ the manifold $(S^2\times S^2)\#(S^2\times
S^2)$ with an open disk removed. Let $\phi_{stab}$ be the monodromy of
this open book. There is a Morse function $f_\text{stab}\co
X_\text{stab}\to\R$ for the page such that $f_\text{stab}$ and
$f_\text{stab}\circ \phi_\text{stab}$ can be connected by a family of
Morse functions having only 0 and 2--handles.
\end{thm}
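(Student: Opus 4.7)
The plan is to build the open book in two stages and then verify the Morse function condition. For the first stage, I would exhibit an open book of $S^5$ with page a single copy of $(S^2\times S^2)\setminus D^4$ and apply Lemma~\ref{obdcs} to form the boundary connect sum with itself, yielding an open book of $S^5\#S^5\cong S^5$ with page $X_\text{stab}$ and monodromy $\phi_\text{stab}$ a product of four Dehn twists along Lagrangian $2$-spheres. To obtain the single-summand open book, view $S^5$ as $\partial D^6$ and construct a Lefschetz fibration $\pi\co D^6\to D^2$ with regular fiber $F=(S^2\times S^2)\setminus D^4$: starting from $F\times D^2$, introduce two Lefschetz critical points whose vanishing cycles $c_1,c_2$ are Lagrangian representatives of the generators $[S^2\times\text{pt}]$ and $[\text{pt}\times S^2]$ of $H_2(F)\cong\mathbb{Z}^2$. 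The associated $3$-handles then cancel the $2$-handles of $F\times D^2$ on the nose, so a handle-cancellation and homology check shows the total space is $D^6$. The induced open book on its boundary has page $F$ and monodromy $\tau_{c_1}\tau_{c_2}$.

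For the Morse function claim, select $f_\text{stab}$ to be a self-indexing Morse function on $X_\text{stab}$ realizing the standard handle decomposition with one index-$0$ and four index-$2$ critical points. Since $\phi_\text{stab}$ is a diffeomorphism, $f_\text{stab}\circ\phi_\text{stab}$ shares this combinatorial handle structure. I would build the connecting family via local models: each Dehn twist $\tau_i$ is supported in a Weinstein tubular neighborhood $\nu(S_i^2)\cong T^*S^2$ of its defining Lagrangian sphere, and $f_\text{stab}$ can be arranged to restrict there to the standard form $\|p\|^2+h\circ\pi$, with $h$ a height function on $S^2$. The twist $\tau_i$ moves the two local critical points to antipodes on the zero section, and sliding them back along a geodesic yields a one-parameter family of Morse functions on $\nu(S_i^2)$ with exactly one index-$0$ and one index-$2$ critical point throughout. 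Concatenating these local families over the four neighborhoods produces a global path of Morse functions on $X_\text{stab}$ having only $0$- and $2$-handles at every stage.

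The main obstacle is arranging pairwise disjoint Weinstein neighborhoods of the four Lagrangian spheres compatible with a single choice of Morse function: the natural $H_2$ generators in each $S^2\times S^2$ summand intersect once, so one must either refine the choice of Lagrangian representatives using symplectic flexibility, or replace the local model by a global Cerf-theoretic argument — exploiting the simple connectivity of $X_\text{stab}$ and the fact that any two handle decompositions of such a $4$-manifold with only $0$- and $2$-handles can be connected by handle slides, hence without birth-death moves that would introduce $1$- or $3$-handles.
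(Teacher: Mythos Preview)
Your first stage has a concrete gap. You want vanishing cycles $c_1,c_2$ that are Lagrangian $2$--spheres in $F=(S^2\times S^2)\setminus D^4$ representing the classes $[S^2\times\text{pt}]$ and $[\text{pt}\times S^2]$. But a Lagrangian $2$--sphere in a symplectic $4$--manifold has normal bundle isomorphic to $T^*S^2$, hence self-intersection $-2$, whereas those two homology classes have self-intersection $0$. So no such Lagrangian representatives exist, and the Lefschetz fibration you describe cannot be built. (The only $(-2)$--class in $H_2(F)$ up to sign is the antidiagonal $e_1-e_2$; a single Dehn twist along it acts on $H_2$ by swapping $e_1,e_2$, and one checks $I-\phi_*$ is then singular, so that open book is not a homotopy sphere either.) Consequently the boundary-sum step and the description of $\phi_\text{stab}$ as ``four Dehn twists'' are unsupported.

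For comparison, the paper does not pass through a single-summand page at all. It works directly with $X_\text{stab}=2(S^2\times S^2)\setminus D^4$: one writes down an explicit isometry $\Phi$ of the intersection form with $I-\Phi$ invertible, invokes Wall's theorem to realize $\Phi$ by a diffeomorphism $\phi_\text{stab}$, and observes (following Saeki) that this forces the resulting open book to be a homotopy $5$--sphere. The Morse-family statement then falls out immediately from the \emph{proof} of Wall's theorem: the realizing diffeomorphism is produced as a composition of $2$--handle slides, which is exactly a one-parameter family of handle decompositions with only $0$-- and $2$--handles. Your local $T^*S^2$ model for tracking a single Dehn twist is not needed, and your acknowledged obstacle (the spheres intersect) never arises. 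Your Cerf-theoretic fallback is in fact the heart of the paper's argument, but note that the input you need is not merely that two handle decompositions are connected by slides --- it is that the specific monodromy $\phi_\text{stab}$ is itself realized by a sequence of slides, which is precisely what Wall's theorem supplies.
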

\begin{proof}
In \cite{Saeki87}, Saeki showed there was an open book for $S^5$
with the given page. We sketch the argument here for the convenience
of the reader. Given diffeomorphisms $\phi$ of $X_\text{stab}$ that is
the identity on the boundary we see that the identity map $I$ on the
homology chains of $X_\text{stab}$ and $\phi_*$ induce the same map on
the homology chains in $\partial X_\text{stab}$. Thus $I-\phi_*$
induce a map from $H_2(X_\text{stab},\partial X_\text{stab})\to
H_2(X_\text{stab})$. Since $X_\text{stab}$ can be build with just 0
and 2--handles one may easily check that the open book $(X,\phi)$
gives a homotopy sphere if $I-\phi_*$ is an isomorphism. Using the
obvious product basis for $H_2(X_\text{stab})$ the matrix
\[
\Phi=
\begin{pmatrix}
0&0&1&0\\
0&1&0&1\\
-1&0&1&0\\
0&-1&0&0\\
\end{pmatrix}
\]
defines an automorphism of $H_2(X_\text{stab})$ that preserves the
intersection pairing. As $H_1(X_\text{stab})=0$ this determines a map
$I-\Phi\co H_2(X_\text{stab},\partial X_\text{stab})\to
H_2(X_\text{stab})$ that is easily seen to be an isomorphism. A result
of Wall \cite{Wall64} implies that there is a diffeomorphism
$\phi_\text{stab}$ of $X_\text{stab}$ that induces the $\Phi$. Thus
$(X_\text{stab},\phi_\text{stab})$ give an open book decomposition for
$S^5$ which we denote by $(Y_\text{stab},\pi_\text{stab})$.

One may easily show, {\em cf.\ }\cite{Kirby89}, that the
diffeomorphism given in Wall's theorem can be induced by a sequence of
2--handle slides. More specifically, there is a family of handle
decompositions $H_t$, $t\in[0,1]$, of manifold $X_t=X_\text{stab}$
that starts and ends with the same handle decomposition of
$X_\text{stab}=X_0=X_1$. In addition each $H_t$ only consists of 0 and
2--handles and the family describes isotopies of the attaching regions
and handles slides. We can use the handle decomposition of $X_t$ to
describe a diffeomorphism $\phi_t\co X_0\to X_t$ and $\phi_1\co
X_\text{stab}\to X_\text{stab}$ is a diffeomorphism that induces the
map $\Phi$ on $H_2(X_\text{stab})$, that is $\phi_1$ can be take to be
$\phi_\text{stab}$. Now if $f_\text{stab}$ is a Morse function on
$X_0=X_\text{stab}$ that corresponds to $H_0$ then set
$f_t=f_\text{stab}\circ \phi_t$. Thus we have a sequence of Morse
functions $f_t\co X_\text{stab}\to \R$ that correspond to 2--handle
slides and $f_1=f_\text{stab}\circ\phi_1$.
\end{proof}

\subsection{A notion of high dimensional Lutz twists}\label{sec:lutz}

Recall the standard contact structure $\xi_{std}$ on $S^3$ is obtained
as the set of complex tangencies to the unit $S^3$ in $\C^2$. There is
an overtwisted contact structure $\xi_{ot}$ on $\C^2$ in the same
homotopy class of plane field as $\xi_{std}$. This follows from
Eliashberg's classification of overtwisted contact structures
\cite{Eliashberg89}, or can easily be constructed by performing a full
Lutz twist, see Section~\ref{sec:ot} below, on a transverse unknot in
$\xi_{std}$.

\begin{prop}\label{twist}
There is a contact structure $\xi$ on $D^4\times S^1$ such that $\xi$
induces the overtwisted contact structure $\xi_{ot}$ on $\partial
D^4\times \{\theta\}$ for each $\theta\in S^1$. More precisely, if
$\alpha_{ot}$ is a 1--form for which $\xi_{ot}=\ker \alpha_{ot}$ then
in a neighborhood $S^3\times (1/2,1]\times S^1$ of the boundary of
$D^4\times S^1$ the contact structure is given by
\[ 
\xi=\ker (Kd\theta + t\alpha_{ot}),
\]
where $t$ is the coordinate on $(1/2,1]$, $\theta$ is the coordinate
on $S^1$, and $K$ is any positive constant.  Moreover, $\xi$ is
homotopic through almost contact structures to the almost contact
structure coming from the tangents to $D^4\times \{\theta\}$, for each
$\theta\in S^1$, where we think of $D^4$ as the unit disk in $\C^2$.
\end{prop}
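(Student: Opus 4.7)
My plan is to verify the claimed contact form near the boundary, extend it across the interior by a higher--dimensional Lutz twist, and dispatch the almost contact homotopy by elementary obstruction theory.

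For the boundary check, a direct computation on the collar $S^3\times (1/2,1]\times S^1$ with $\alpha = Kd\theta + t\alpha_{ot}$ gives $d\alpha = dt\wedge \alpha_{ot} + t\,d\alpha_{ot}$, so $(d\alpha)^2 = 2t\,dt\wedge\alpha_{ot}\wedge d\alpha_{ot}$ (using $(d\alpha_{ot})^2=0$ on the $3$--manifold $S^3$), and hence $\alpha\wedge(d\alpha)^2 = 2tK\, d\theta\wedge dt\wedge\alpha_{ot}\wedge d\alpha_{ot}$. Since $\alpha_{ot}\wedge d\alpha_{ot}$ is a volume form on $S^3$ and $t,K>0$, this is a volume form on the collar, and by construction $\ker\alpha|_{\partial D^4\times\{\theta\}} = \xi_{ot}$.

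The main content, and the main obstacle, is extending $\alpha$ across all of $D^4\times S^1$. Parametrize $D^4\setminus\{0\}$ in polar coordinates $(r,\sigma)\in (0,1]\times S^3$ and take the Lutz--twist ansatz $\alpha = f(r)\,d\theta + g(r)\,\widetilde\alpha$, where $\widetilde\alpha$ is a $1$--form on $S^3$ and $(f,g)\co [0,1]\to\R^2$. A computation analogous to the boundary one shows $\alpha\wedge(d\alpha)^2$ is proportional to $g(r)\bigl(f'(r)g(r)-f(r)g'(r)\bigr)$ times the obvious volume form, so the contact condition amounts to requiring that $(f,g)$ rotate monotonically in $\R^2$ (with $g\neq 0$) away from the origin. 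Choose $(f,g,\widetilde\alpha)$ to equal $(K,r,\alpha_{ot})$ for $r$ near $1$, reproducing the prescribed boundary form, and to equal $(1, r^2, \widetilde\alpha_{std})$ for $r$ near $0$, so that $\alpha$ matches the standard transverse--loop contact form $d\theta + \lambda_{std}$ in a neighborhood of the core circle $\{0\}\times S^1$ (here $\lambda_{std}$ is the Liouville form on $D^4\subset\C^2$ and $\widetilde\alpha_{std}$ its restriction to the unit $S^3$). Vary $\widetilde\alpha$ continuously from $\widetilde\alpha_{std}$ to $\alpha_{ot}$ as $r$ increases, possible because both are contact forms on $S^3$ in the same homotopy class of plane fields, and choose the polar angle of $(f,g)$ to wind monotonically. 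The resulting $\alpha$ is contact on all of $D^4\times S^1$; equivalently, this construction is precisely the full Lutz twist along the transverse circle $\{0\}\times S^1$ whose existence in the needed form is furnished by \cite{EtnyrePancholi11}.

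For the homotopy claim, $D^4\times S^1\simeq S^1$, and homotopy classes of almost contact structures on a $5$--manifold $M$ correspond to homotopy classes of sections of the $SO(5)/(U(2)\times\mathbf{1})$--bundle associated to $TM$, i.e.\ to $[M, SO(5)/(U(2)\times\mathbf{1})]$ here since $TM$ is trivial. The long exact sequence of the fibration $U(2)\to SO(5)\to SO(5)/U(2)$, combined with the surjection $\pi_1(U(2))=\Z\twoheadrightarrow \pi_1(SO(5))=\Z/2$ (the generator of $\pi_1(U(2))$ is a $2\pi$--rotation in a $\C$--factor, which represents the generator of $\pi_1(SO(5))$) and the fact $\pi_0(U(2))=0$, gives $\pi_1(SO(5)/U(2))=0$. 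Hence all almost contact structures on $D^4\times S^1$ lie in a single homotopy class, and $\xi$ is in particular homotopic to the almost contact structure tangent to the slices $D^4\times\{\theta\}$ with their standard complex structure from $\C^2$.
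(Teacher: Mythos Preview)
Your boundary computation and your homotopy argument via $\pi_1(SO(5)/U(2))=0$ are both fine; the latter is a clean alternative to tracking the homotopy through the Etnyre--Pancholi construction as the paper does (the paper records $SO(5)/U(2)\cong\C P^3$, which gives the same vanishing).

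The problem is the middle step. Your ansatz $\alpha=f(r)\,d\theta+g(r)\,\widetilde\alpha_r$ with $\widetilde\alpha_r$ interpolating from $\widetilde\alpha_{std}$ to $\alpha_{ot}$ does not work as written, for two reasons. First, you assert the interpolation is ``possible because both are contact forms on $S^3$ in the same homotopy class of plane fields,'' but homotopy of plane fields does \emph{not} give a path through contact forms: by Gray stability any such path would make $\xi_{std}$ and $\xi_{ot}$ isotopic, contradicting the tight/overtwisted dichotomy. Second, even if you allow $\widetilde\alpha_r$ to be an arbitrary (non-contact) $1$--form for intermediate $r$, your computation of $\alpha\wedge(d\alpha)^2$ is incomplete: once $\widetilde\alpha$ depends on $r$ there are additional terms involving $dr\wedge\partial_r\widetilde\alpha_r$, and the expression you wrote, proportional to $g(f'g-fg')\,\widetilde\alpha_r\wedge d\widetilde\alpha_r$, is no longer the whole story. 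In particular there is no reason the result is a volume form when $\widetilde\alpha_r$ fails to be contact. The paper is explicit about exactly this obstruction: in the setup for Lemma~\ref{lem:twist} it writes the desired Liouville form $\lambda$ on a neighborhood of $\partial(Y\times[0,1])$ and then says ``It is well known that $\lambda$ cannot be extended over all of $Y\times[0,1]$.'' Your sentence ``equivalently, this construction is precisely the full Lutz twist \dots furnished by \cite{EtnyrePancholi11}'' papers over the gap rather than closing it; the construction in \cite{EtnyrePancholi11} is genuinely five--dimensional and does not proceed by the radial interpolation you describe.

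The paper's route is to quote Lemma~\ref{lem:twist} for the $T^2\times[0,1]$ model, perform a contact cut on the torus boundaries to turn $T^2\times[0,\pi/2]$ and $T^2\times[0,5\pi/2]$ into $(S^3,\xi_{std})$ and $(S^3,\xi_{ot})$ respectively, and then cap the $\xi_{std}$ end with the standard $(D^4\times S^1,\ker(Kd\theta+\lambda_{std}))$. If you want to keep your outline, replace the failed interpolation paragraph by this invocation of Lemma~\ref{lem:twist} and the contact--cut/capping step.
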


This theorem follows easily from a results of Pancholi and the author
\cite{EtnyrePancholi11}. To state the result we first establish some
notation. Consider $T^2\times \R$ with coordinates $(\vartheta,
\varphi, r)$ and the contact structure
\[
\xi_{T^2\times \R} = \ker (\cos r \, d\vartheta + \sin r\, d\varphi).
\]
We will think of $\xi_{[a,b]}$ as the contact structure obtained from
$\xi_{T^2\times \R}$ by restricting it to $T^2\times [a,b]$. We also
denote $T^2\times \{r\}$ by $T_r$.

Notice that $T^2\times{[0,\pi/2]}$ has boundary $T_0\cup T_{\pi/2}$
and the characteristic foliation induced by $\xi_{[0,\pi/2]}$ is given
by a linear horizontal floatation on $T_0$ and linear vertical
foliation on $T_{\pi/2}$. So if we collapse the leave of the foliation
on the boundary of $T^2\times {[0,\pi/2]}$, or more precisely perform a
contact cut \cite{Lerman01}, the we obtain $S^3$ and $\xi_{[0,\pi/2]}$
induces the standard contact structure on $S^3$.  Similarly performing
a contact cut on the boundary of $T^2\times [0,5\pi/2]$ will also
produce $S^3$ but this time with the overtwisted contact structure
$\xi_{ot}$. (In particular it is easy to see that this contact
structure is obtained from the one above by a full Lutz twist.)

We now define a symplectic structure on a neighborhood of the boundary
of $Y\times [0,1]$, where $Y=T^2\times[0,1]$. Using an identification
of $[0,1]$ with $[0,\pi/2]$ we construct a diffeomorphism $\phi_0\co 
Y\times \{0\} \to T^2\times [0,\pi/2]$. Then on $\left(Y\times
[0,1/4]\right)\cup \left(T^2\times([0,1/4]\cup [3/4,1])\times
[0,1]\right)$ we let $\lambda$ be $t\, \phi_0^*\alpha$, where $t$ is
the coordinate on $[0,1]$. Similarly an identification of $[0,1]$ with
$[0,5\pi/2]$ can be used to build a diffeomorphism
$\phi_1\co Y\times\{1\}\to T^2\times[0,5\pi/2]$. Then on $Y\times[3/4,1]$
we can define $\lambda$ to be $t\, \phi_1^*\alpha$. If $\phi_1$ is
chosen correctly, $\lambda$ will be a well defined 1--form on a closed
neighborhood of the boundary of $Y\times [0,1]$ such that $d\lambda$
is a symplectic form with $Y\times \{0\}$ concave, $Y\times \{1\}$
convex and $(\partial Y)\times [0,1]$ flat. It is well known that
$\lambda$ cannot be extended over all of $Y\times[0,1]$ but if there
was an extension we could construct a contact from on $Y\times
[0,1]\times S^1$.  Pancholi and the author \cite{EtnyrePancholi11}
proved that this contact structure does exist.

\begin{lem}[Etnyre and Pancholi, 2011 \cite{EtnyrePancholi11}]\label{lem:twist}
With $Y$ and $\lambda$ as defined above, there is a contact structure
$\xi$ on $Y\times [0,1]\times S^1$ so that near the boundary
$\xi=\ker(Kd\theta+ \lambda)$, where $\theta$ is the coordinate on
$S^1$ and $K$ is any positive constant. Moreover, the contact
structure $\xi$ is homotopic through almost contact structures to
$\ker(Kd\theta+(t\, \phi_0^*\alpha))$.
\end{lem}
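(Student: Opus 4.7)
The plan is to exhibit an explicit contact form on $Y\times[0,1]\times S^1$ in product shape $\alpha_\xi = Kd\theta + \beta$, where $\beta$ is a $\theta$--parameterized family of 1--forms on $Y\times[0,1]$ that is forced to equal $\lambda$ on the prescribed collar of $\partial(Y\times[0,1])$ but may otherwise depend on $\theta$. Splitting $d = d_{Y\times I} + d\theta\wedge\partial_\theta$ and noting that $\beta\wedge(d_{Y\times I}\beta)^2$ vanishes for dimension reasons on the 4--manifold $Y\times[0,1]$, a direct calculation yields
\[
\alpha_\xi \wedge (d\alpha_\xi)^2 \;=\; d\theta \wedge \Bigl(K\,(d_{Y\times I}\beta_\theta)^2 \;-\; 2\,\beta_\theta\wedge d_{Y\times I}\beta_\theta \wedge \partial_\theta \beta_\theta\Bigr),
\]
so that the contact condition is exactly pointwise positivity of the 4--form in parentheses on $Y\times[0,1]$ for every $\theta\in S^1$.

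The key observation is that this is strictly weaker than asking each $\beta_\theta$ to be Liouville on $Y\times[0,1]$: one may allow $(d_{Y\times I}\beta_\theta)^2$ to vanish at a point, provided the correction term $-2\beta_\theta\wedge d_{Y\times I}\beta_\theta\wedge\partial_\theta\beta_\theta$ is strictly positive there. This is the mechanism that bypasses the well--known obstruction to extending $\lambda$ as a global Liouville form. The construction of $\beta_\theta$ is then explicit: on $Y\times[0,1] = T^2\times[0,1]_s\times[0,1]_t$ I would take an ansatz
\[
\beta_\theta \;=\; f(s,t,\theta)\bigl(\cos r(s,t,\theta)\,d\vartheta + \sin r(s,t,\theta)\,d\varphi\bigr)
\]
(with an added $dt$ or $ds$ term where needed to honor the flat--boundary condition along $\partial Y\times[0,1]$) and choose the phase $r$ so that $r(s,0,\theta)=\tfrac{\pi s}{2}$ on the collar of $Y\times\{0\}$ and $r(s,1,\theta)=\tfrac{5\pi s}{2}$ on the collar of $Y\times\{1\}$, while in the interior $r$ acquires a nontrivial dependence on $\theta$ that absorbs the extra $2\pi$ of twist by letting $r$ wind once as $\theta$ traverses $S^1$. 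A direct Jacobian computation then shows that the 4--form in parentheses is pointwise positive for suitably chosen $f$ and $r$ and any sufficiently large $K$.

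The main obstacle is the simultaneous matching of the rigid boundary prescription $\beta = \lambda$ on all four faces of $\partial(Y\times[0,1])$, including the flat--boundary condition along $\partial Y\times[0,1]$, with the pointwise positivity in the interior; all the flexibility comes from the $\theta$--dependence in $f$ and $r$, which must be engineered carefully near the corners. Once that matching is achieved, taking $K$ sufficiently large gives a genuine contact form. For the final clause, both $\xi$ and $\ker(Kd\theta + t\phi_0^*\alpha)$ are cooriented hyperplane fields on $Y\times[0,1]\times S^1$ that already coincide on the inner collar of $Y\times\{0\}\times S^1$; the straight--line interpolation $\alpha_\tau = Kd\theta + (1-\tau)\beta_\theta + \tau(t\phi_0^*\alpha)$ for $\tau\in[0,1]$, together with the induced linear interpolation of compatible conformal symplectic structures on the kernels, produces the required path of almost contact structures.
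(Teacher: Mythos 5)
Your reduction of the contact condition is correct: for $\alpha_\xi=K\,d\theta+\beta_\theta$ one indeed gets
\[
\alpha_\xi\wedge(d\alpha_\xi)^2=d\theta\wedge\Bigl(K\,(d_{Y\times I}\beta_\theta)^2-2\,\beta_\theta\wedge d_{Y\times I}\beta_\theta\wedge\partial_\theta\beta_\theta\Bigr),
\]
and you are also right that the only way around the nonexistence of a Liouville extension of $\lambda$ is to let the second term carry the positivity where $(d_{Y\times I}\beta_\theta)^2$ fails to be positive. But your concrete ansatz destroys exactly that mechanism. If $\beta_\theta=f(\cos r\,d\vartheta+\sin r\,d\varphi)$ has no $ds$ or $dt$ component (whatever the dependence of $f,r$ on $s,t,\theta$, or even on $\vartheta,\varphi$), then every term of $d_{Y\times I}\beta_\theta$ contains $d\vartheta$ or $d\varphi$ and $\beta_\theta\wedge\partial_\theta\beta_\theta$ is proportional to $d\vartheta\wedge d\varphi$, so $\beta_\theta\wedge d_{Y\times I}\beta_\theta\wedge\partial_\theta\beta_\theta\equiv 0$. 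The contact condition then collapses to $(d_{Y\times I}\beta_\theta)^2>0$ for each fixed $\theta$, i.e.\ to precisely the impossible problem of extending $\lambda$ with $d\beta_\theta$ symplectic; letting $r$ ``wind once in $\theta$'' buys you nothing. The parenthetical ``added $dt$ or $ds$ term where needed'' is therefore not a boundary-matching afterthought: it is the only possible source of the compensating term, and the entire difficulty of the construction lives in designing it, which your sketch does not do. A second sign of trouble is the phrase ``any sufficiently large $K$'': enlarging $K$ helps only where $(d_{Y\times I}\beta_\theta)^2>0$ and makes the inequality strictly harder wherever it is negative, which is exactly the region you need to rescue, so the large-$K$ limit cannot be the closing step. (The homotopy claim at the end is also under-argued — a straight-line interpolation of $1$--forms does not by itself produce a path of almost contact structures; one must also supply a path of nondegenerate structures on the kernels — but this is minor compared to the main gap.)

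For comparison: the present paper does not prove this lemma at all; it quotes it from Etnyre--Pancholi \cite{EtnyrePancholi11}, where the proof is a lengthy and delicate explicit construction rather than a two-parameter ansatz of the above form. So while your framing of the problem (product-type form, pointwise positivity with a $\theta$-derivative correction term) is a reasonable way to see what must be achieved, the proposal as written does not contain the key construction and, in the form stated, the crucial positivity claim is false.
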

With this lemma in hand we return to the proof of our main theorem of
this section.
\begin{proof}[Proof of Proposition~\ref{twist}]
Simultaneously performing a contact cut on $\partial Y\times \{t\}$
for each $t$ gives a contact structure on $S^3\times [0,1]\times
S^1$. We can glue $D^4\times S^1$ with the contact structure $Kd\theta
+ \lambda$, where $\lambda$ is the standard Liouville form on $D^4$, 
to the lower boundary of $S^3\times [0,1]\times S^1$. This results in a
contact structure on $D^4\times S^1$ with the desired properties.
\end{proof}

\subsection{Plane fields and contact structures on 3--manifolds}\label{sec:ot}

Let $Y$ be a closed oriented 3--manifolds. A contact structure $\xi$
on $Y$ is call \dfn{overtwisted} if there is a disk $D$ embedded in
$Y$ that is tangent to $\xi$ along the boundary of $D$ and at one
point on the interior of $D$ and is transverse to $\xi$ elsewhere. The
disk $D$ is called an \dfn{overtwisted disk}.

Recall that given any contact structure $\xi$ on $Y$ we can alway
alter it to be overtwisted by a Lutz twist. To define a Lutz twist let
$K$ be a knot in $Y$ that is transverse to $\xi$. It is easy to show,
for example see \cite{Geiges08}, that $K$ has a standard neighborhood
$S$ and the boundary of $S$ has a neighborhood contactomorphic to
$(T^2\times[a,b],\xi_{[a,b]})$ for some $a<b$, where we are using the
notation from the previous section. Notice that one can replace the
contact structure on this neighborhood with
$(T^2\times[a,b],\xi_{[a,b+2\pi]})$ to obtain a new contact structure
on $Y$. We say that this contact structure is obtained from $\xi$ by a
full Lutz twist along $K$. It is well known, see for example
\cite{Geiges08} for a nice exposition of this, that the contact
structure obtained from $\xi$ by a full Lutz twist is homotopic as a
plane field to $\xi$; moreover, this homotopy can be done fixing the
plane fields along $K$ (which is called the core of the Lutz tube).
It is also simple to see that if full Lutz twists are performed
parametrically on a family of contact structure then the resulting
family is homotopic to the original family (also fixing the planes
along the cores of the Lutz tubes).

We fix a base point $p$ in $Y$ and a plane $P\subset TY$. Then denote
by $\mathcal{P}_p(Y)$ the space of plane fields on $Y$ that agree with
$P$ at $p$. Let $D$ be a disk in $Y$ that is tangent to $P$ at $p$
(and $p$ is on the interior of $D$). We let $\mathcal{C}_{ot}(Y)$ be
the space of contact structures on $Y$ for which $D$ and overtwisted
disk.
An amazing insight of Eliashberg relates these spaces. 
\begin{thm}[Eliashberg, 1989 \cite{Eliashberg89}]\label{otclass}
The natural inclusion map 
\[
i\co \mathcal{C}_{ot}(Y)\to \mathcal{P}_p(Y).
\]
is a homotopy equivalence. 
\end{thm}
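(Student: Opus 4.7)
The plan is to show that $i$ induces a bijection on $\pi_k$ for every $k\geq 0$, from which the homotopy equivalence will follow by Whitehead's theorem once one notes that both spaces have the homotopy type of CW complexes. The two ingredients needed are a parametric existence result (every family of plane fields agreeing with $P$ at $p$ is homotopic rel $p$ to a family of overtwisted contact structures, each having $D$ as an overtwisted disk) and a parametric uniqueness result (two such overtwisted families that are homotopic as plane fields are isotopic through overtwisted contact structures).

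For existence I would start with a plane field $\eta\in\mathcal{P}_p(Y)$ and fix a small neighborhood $U$ of $D$ carrying a standard overtwisted contact germ that agrees with $P$ at $p$. Choose a triangulation of $Y-\operatorname{int} U$ fine enough that $\eta$ is nearly constant on each simplex, then inductively modify $\eta$ up the skeleta: on $1$- and $2$-simplices a generic homotopy puts $\eta$ into a contact normal form, and on each $3$-simplex the problem of extending a contact germ on $\partial \Delta^3$ to a contact structure on $\Delta^3$ has a single $\Z$-valued obstruction coming from $\pi_3(S^2)$. Because $D$ sits inside $U$, a full Lutz twist along a small transverse arc in $U$ realizes any element of this group without altering the homotopy class of the underlying plane field (by the discussion in Section~\ref{sec:ot}), so the obstruction can always be cancelled.

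For uniqueness the same argument is applied to a path $\eta_t$ of plane fields joining two overtwisted contact structures $\xi_0,\xi_1\in\mathcal{C}_{ot}(Y)$: regard the family as a plane distribution on $Y\times [0,1]$ that is already contact on $Y\times\{0,1\}$ and contains the parametric overtwisted disk $D\times [0,1]$, and perform the cell-by-cell extension relative to the two boundary copies. The same scheme works with $[0,1]$ replaced by $D^k$ or $S^k$ to obtain bijectivity on $\pi_k$ for higher $k$.

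The main obstacle is the contact extension across a $3$-cell in the presence of the $\pi_3(S^2)$-obstruction, together with the need to carry this out coherently over a parameter space. The decisive trick is the overtwisted disk: it acts as an inexhaustible reservoir where Lutz twists can be performed to absorb any obstruction, all while leaving the homotopy class of the plane field unchanged. Organizing the obstruction theory so that lower skeleta are not destroyed by later modifications, and making the Lutz absorption work parametrically, is where the real technical work lies.
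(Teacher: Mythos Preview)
The paper does not prove this theorem; it is quoted as Eliashberg's result \cite{Eliashberg89} and used as a black box in Step~4 of the proof of Theorem~\ref{main}. So there is no ``paper's own proof'' to compare against.

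That said, your sketch is a reasonable outline of Eliashberg's original argument: the proof in \cite{Eliashberg89} does proceed by a parametric, cell-by-cell extension of a plane field to a contact structure, with the overtwisted disk serving as the place where obstructions (ultimately living in $\pi_3(S^2)$) are absorbed via Lutz twists. A couple of points where your sketch is imprecise compared with Eliashberg's actual proof: the extension over $2$-simplices is not a triviality handled by ``generic homotopy'' --- Eliashberg introduces a specific normal form for characteristic foliations (the ``almost horizontal'' condition) and proves a delicate extension lemma across $2$-cells; and the absorption of the $3$-cell obstruction is not literally done by full Lutz twists on arcs but by a more careful local model near the overtwisted disk that allows one to adjust the Hopf invariant. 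These are the genuinely hard technical steps, and your proposal correctly flags them as such without supplying the mechanisms. As a plan it is on the right track, but the phrase ``generic homotopy puts $\eta$ into a contact normal form'' on the $2$-skeleton hides most of the real content of Eliashberg's paper.
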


\section{Almost contact structures}\label{acs}
In this section we study hyperplane fields and almost contact
structures on 5--manifolds.

\subsection{Hyperplane fields on 5--manifolds}\label{hypfields}

Let $M$ be an oriented 5--manifold. To study the homotopy classes of
hyperplane fields on $M$ we consider the bundle associated to the
tangent bundle $TM$ with fiber the Grassmann manifold of oriented
4--planes in $\R^5$:
\[
\xymatrix{\ar @{} [dr]
G^+(4,5)\ar[r] & E \ar[d] \\
       & M.       }
\]
Hyperplane fields on $M$ correspond to sections of this bundle. So we
are interested in studying the homotopy classes of sections of this
bundle. Throughout this section we will make no distinction between a
hyperplane field and a section of this bundle. Note that $G^+(4,5)$ is
diffeomorphic to $S^4$. (One may easily see this by noting that
oriented hyperplanes are in one-to-one correspondence with unit
vectors in $\R^5$, where the correspondence comes from an inner
product on $\R^5$.)

Studying the obstruction theory of this bundle we first observe that
$\pi_n(G^+(4,5))$ agrees with $[S^n, G^+(4,5)]$, the set of homotopy
classes of maps of $S^n$ to $G^+(4,5)$ (that is $G^+(4,5)$ is
$n$--simple). In addition because $TM$ is oriented it is well known
that the action of $\pi_1(M)$ on $\pi_n(G^+(4,5))$ coming from the
bundle $E$ is trivial. Thus the obstruction to homotoping two sections
of $E$ over the $n$--skeleton of $M$ lies in
$H^n\left(M;\pi_n(G^+(4,5))\right)$. Knowing that
\[
\pi_n(G^+(4,5))= \begin{cases}0 & n<4\\
\Z& n=4\\
\Z/2\Z& n=5
\end{cases}
\] 
it is clear that any section $s_1$ can be homotoped to $s_2$ on the
$3$--skeleton of $M$ and the obstruction to homotoping them together
on the 4--skeleton is an element
\[
d_4(s_1,s_2)\in H^4(M;\Z)
\]
and is a primary obstruction. Moreover if $s_1$ and $s_2$ agree on the
4--skeleton the, non-primary, obstruction to homotoping $s_1$ to $s_2$
over the 5--skeleton is an element
\[
d_5(s_1,s_2)\in H^5(M;\Z/2\Z).
\]

Given a plane field $\xi$ on a 3--manifold $Y$ and an open book
decomposition $(Y,\pi)$ of a 5--manifold $M$ we can construct a
hyperplane field as follows. Let $N=Y\times D^2$ be a neighborhood of
$Y$ in $M$, so that $\pi|_{(N-Y)}$ is just the projection to the
$\theta$--coordinate of $D^2$. On $N$ we take the hyperplane field
\[
\ker \left\{ f(r)d\theta + g(r) \alpha\right\}
\]
where $\alpha$ is a 1--form on $Y$ that defines $\xi$, $(r,\theta)$ are
polar coordinates on the unit disk $D^2$, $f$ is a non-decreasing
function equal to $r^2$ near 0 and constantly 1 near 1, and $g$ is a
non-increasing function equal to 1 near 0 and 0 near 1 for which $f$
and $g$ are never simultaneously zero. This hyperplane field can be
extended across $(M-N)$ by $\ker d\pi$. Denote this hyperplane field
$H(\xi)$. It is clear that $H$ gives a well defined map from the set
of plane fields on $Y$ to the set of 
hyperplane fields on $M$. Our main observation is that
under certain hypothesis this map is onto.
\begin{thm}\label{getall}
Let $M$ be any closed oriented 5--manifold and $(Y,\pi)$ any open book
decomposition of $M$. Then we have the following:
\begin{enumerate}
\item The map $H$ is well defined up to homotopy. That is if $\xi$ and
$\xi'$ are to homotopic plane fields on $Y$ then $H(\xi)$ is homotopic
to $H(\xi')$.
\item If the pages of $(Y,\pi)$ are handlebodies with handles of
index 0,1 and 2, then any homotopy class of hyperplane field is in
the image of the map $H$ defined above. More specifically, if $\eta$
is a hyperplane field on $M$ then there is some plane field $\xi$ on
$Y$ such that $H(\xi)$ is homotopic to $\eta$.
\end{enumerate}
\end{thm}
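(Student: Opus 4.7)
Part (1) is essentially a direct verification. Given a path $\xi_t$ of plane fields on $Y$ with a continuous family of defining $1$-forms $\alpha_t$, substituting $\alpha_t$ into the explicit formula defining $H$ produces a continuous family of hyperplane fields $H(\xi_t)$ on $M$ connecting $H(\xi)$ to $H(\xi')$. Independence of $[H(\xi)]$ from the auxiliary choices of $f$, $g$, and the defining $1$-form follows from the same kind of path argument through the contractible space of valid data.

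For part (2) the crucial topological input is that the hypothesis on the handle indices of the pages forces the mapping torus $T_\phi=\overline{M-N}$ to be homotopy equivalent to a $3$-dimensional CW complex: $X$ retracts onto its $2$-dimensional handle spine, and the mapping torus of a self-map of a $2$-complex is itself a $3$-complex. Since the fiber $G^+(4,5)\cong S^4$ has $\pi_k=0$ for $k<4$, and since $M-Y$ deformation retracts onto $T_\phi$ which has no cohomology above degree $3$, all obstructions to homotoping sections of $E$ over $M-Y$ lie in trivial groups. The same argument over $N\simeq Y$ (a $3$-manifold) shows that sections of $E$ over $N$ are also unobstructedly classified. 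Consequently on each of the two pieces $N$ and $T_\phi$ separately, $\eta$ is homotopic to $H(\xi)$ for any choice of $\xi$.

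To pin down a useful $\xi$, perturb $\eta$ to be transverse to the $D^2$-fibers of $N=Y\times D^2$ along the binding, and set $\xi:=\eta|_Y\cap TY$, a $2$-plane field on $Y$. At each $y\in Y$ the $4$-plane $\eta_y$ contains $\xi_y$ and, by transversality, maps onto the normal fiber $T_yD^2$; the space of such $4$-planes is an affine torsor over $\mathrm{Hom}(T_yD^2,T_yY/\xi_y)$, hence contractible. This yields a canonical homotopy from $\eta|_Y$ to $\xi\oplus TD^2 = H(\xi)|_{Y\times\{0\}}$, extendable across a collar of $Y$, after which we may assume $\eta$ and $H(\xi)$ agree on a neighborhood $V$ of $Y$.

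The main difficulty is gluing the two local homotopies (on $V\supset N$ and on $T_\phi$) into a global homotopy. By the dimension argument the absolute obstructions vanish on each piece, but the gluing data is controlled by a Mayer--Vietoris boundary term supported on $Y\times S^1$, equivalently by classes in $H^4(M;\Z)$ and $H^5(M;\Z/2)$. The resolution is to exploit the remaining freedom in choosing $\xi$: plane fields on $Y$ carry an $H^2(Y;\Z)$-worth of primary classes together with a further $\Z$-worth of finer classes realized by Lutz twists, and varying $\xi$ changes $H(\xi)$ in a way that, via the natural push-forward maps induced by the framed codimension-$2$ embedding $Y\hookrightarrow M$, hits any prescribed value of $d_4(\eta,H(\xi))$ and $d_5(\eta,H(\xi))$. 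Verifying surjectivity of these push-forwards onto the obstructions that actually arise is the core technical point; once it is established, $\xi$ can be modified to kill both obstructions and the required global homotopy is produced.
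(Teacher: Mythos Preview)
Your Part~(1) is fine and matches the paper's one-line argument.

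For Part~(2), your overall strategy is sound and aligns with the paper's: reduce to showing that as $\xi$ varies over plane fields on $Y$, the obstruction classes $d_4(\eta, H(\xi)) \in H^4(M;\Z)$ and $d_5(\eta, H(\xi)) \in H^5(M;\Z/2)$ can be made to vanish. However, you explicitly stop at the crucial step, writing that ``verifying surjectivity of these push-forwards onto the obstructions that actually arise is the core technical point.'' This is precisely where the content of the proof lies, and without it the argument is incomplete. Your Mayer--Vietoris framing and the preliminary choice $\xi = \eta|_Y \cap TY$ are not wrong, but they do not advance the argument past the point where this computation is needed.

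The paper fills the gap with two concrete computations. For $d_4$, it recasts the difference class via Poincar\'e duality: $d_4(\eta,\eta')$ is dual to the $1$-cycle $\gamma(\eta,-\eta')$ where the two sections agree. It then shows, using the handle hypothesis, that the inclusion $Y \hookrightarrow M$ induces a surjection $H_1(Y;\Z) \to H_1(M;\Z)$ (since $M$ is built from a page-neighborhood by handles of index $\geq 3$, and the page from $Y$ by handles of index $\geq 2$). A careful transversality set-up, comparing $H(\xi')$ with a perturbed variant $\widetilde H(\xi)$, identifies $[\gamma(H(\xi), -H(\xi'))]$ with the image in $H_1(M;\Z)$ of the Poincar\'e dual of $d_2(\xi,\xi') \in H^2(Y;\Z)$, plus a fixed class independent of $\xi'$. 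Since $d_2(\xi,\xi')$ ranges over all of $H^2(Y;\Z)$, surjectivity onto $H^4(M;\Z)$ follows.

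For $d_5$, the paper shows that if $\xi$ and $\xi'$ agree on the $2$-skeleton of $Y$ and differ by an odd $d_3$ class, then $d_5(H(\xi), H(\xi')) \neq 0$. The argument localizes to the top cell: the difference map $h\co S^3 \to S^2$ encoding $d_3(\xi,\xi')$ gets doubly suspended (via the identifications $S^3 * S^1 \cong S^5$ and $S^2 * S^1 \cong S^4$ coming from $D^5 = D^3 \times D^2$) to the difference map $H\co S^5 \to S^4$ encoding $d_5$, and the Freudenthal suspension theorem gives that $\pi_3(S^2) \to \pi_5(S^4)$ is the surjection $\Z \to \Z/2\Z$. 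This is the step your sketch gestures at with ``a further $\Z$-worth of finer classes,'' but the identification of $d_5(H(\xi),H(\xi'))$ with $d_3(\xi,\xi') \bmod 2$ requires real work.
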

\begin{remark}
The obvious generalization of this theorem to higher dimensional
manifolds is also true and might be of use in trying to construct
contact structures homotopic to almost contact structures on higher
dimensional manifolds.
\end{remark}
\begin{proof}
The first statement is clear as we can apply the same construction
used in defining $H$ to the homotopy between $\xi$ and $\xi'$.

The second statement requires a bit more work. We begin by
reinterpreting the obstruction cohomology class $d_4$. Let $\eta$ and
$\eta'$ be two hyperplane fields on $M$. Thinking of $\eta$ and
$\eta'$ as sections of the bundle $G^+(4,5)\to E\to M$ we homotope
them so that they are transverse. Let
\[
\gamma(\eta,\eta')=\{x\in M | \eta(x)=\eta'(x)\}
\]
be the locus where $\eta$ and $\eta'$ agree.  
\begin{lem}\label{lem:recast}
With the notation above, the Poincar\'e dual of $d_4(\eta,\eta')$ is
$[\gamma(\eta,-\eta')]\in H_1(M;\Z)$.
\end{lem}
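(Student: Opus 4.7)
My plan is to realize $d_4(\eta,\eta')$ as the relative Euler class of an interpolating section of a vector bundle, whose zero set is exactly $\gamma(\eta,-\eta')$. First, fix a Riemannian metric on $M$ and identify $G^+(4,5)$ with $S^4$ by sending an oriented hyperplane $\eta_x\subset T_xM$ to its positive unit normal $n(\eta)(x)\in T_xM$. This exhibits $E\to M$ as the unit sphere bundle of $V=TM$, and turns hyperplane fields into unit sections of $V$; reversing the orientation of a hyperplane corresponds to replacing $n(\eta)$ by $-n(\eta)$, so $\gamma(\eta,-\eta')$ is exactly the locus where $n(\eta)=-n(\eta')$.

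Next I will produce a cycle representing the class. On $M\times[0,1]$ consider the section $F$ of $\pi^*V$ given by
\[
F(x,t)=(1-t)\,n(\eta)(x)+t\,n(\eta')(x).
\]
Since the two summands are unit vectors, $F(x,t)=0$ forces $|1-t|=|t|$ and $n(\eta)(x)=-n(\eta')(x)$; hence $F^{-1}(0)=\gamma(\eta,-\eta')\times\{1/2\}$. The transversality hypothesis on $\eta$ and $-\eta'$ as sections of $E$ (which one may arrange by a small homotopy of $\eta$) translates into $F$ being transverse to the zero section of $\pi^*V$, so $F^{-1}(0)$ is a closed $1$-submanifold of $M\times(0,1)$ projecting diffeomorphically onto $\gamma(\eta,-\eta')\subset M$. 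On the boundary $M\times\{0,1\}$ the section $F$ is nowhere zero (it equals $n(\eta)$ or $n(\eta')$), so $F$ extends these nowhere-zero boundary values across the cobordism $M\times[0,1]$.

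Finally I will identify the cycle with the Poincar\'e dual of the obstruction. The primary obstruction to extending a nowhere-zero section from $M\times\partial[0,1]$ across $M\times[0,1]$ is the relative Euler class
\[
e(\pi^*V,\,F|_{\partial})\in H^5\bigl(M\times[0,1],\,M\times\partial[0,1];\,\Z\bigr),
\]
and its Poincar\'e--Lefschetz dual is represented by the zero set of any generic extension, in particular by $F^{-1}(0)$. Under the cross-product isomorphism $H^5(M\times[0,1],M\times\partial[0,1];\Z)\cong H^4(M;\Z)$ and the projection isomorphism $H_1(M\times[0,1])\cong H_1(M)$, the relative Euler class corresponds to the primary difference obstruction $d_4(\eta,\eta')$ while $[F^{-1}(0)]$ corresponds to $[\gamma(\eta,-\eta')]$, giving the lemma. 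The main delicate point is verifying that the relative Euler class really equals the difference obstruction $d_4$---this is a classical fact of obstruction theory, but the signs must be handled carefully since the antipodal map on the fiber $S^4$ is orientation-reversing.
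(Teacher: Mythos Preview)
Your argument is correct, but it takes a different route from the paper's. The paper works directly at the level of the obstruction cochain: after arranging that $\eta$ and $\eta'$ agree on the $3$--skeleton, it evaluates $d_4(\eta,\eta')$ on a $4$--cell $D$ as the degree of the comparison map $S^4\to S^4$ built from $\eta$ on one hemisphere and $\eta'$ on the other. Trivializing $TM$ over $D$ so that $n(\eta)$ is a constant point $p\in S^4$, this degree is the signed count of points where $n(\eta')$ hits the antipode $-p$, i.e.\ the intersection number $D\cdot\gamma(\eta,-\eta')$, which is exactly the Poincar\'e duality pairing. Your approach instead packages the two sections as boundary data on $M\times[0,1]$, realizes the difference class as the relative Euler class of the pulled-back tangent bundle, and reads off its Poincar\'e--Lefschetz dual as the zero set of the linear interpolation. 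The paper's argument is more elementary and entirely self-contained from the cellular definition of $d_4$; yours is cleaner and coordinate-free but leans on the identification of the difference obstruction with a relative Euler class (which, as you rightly note, requires some care with orientations since the antipodal map on $S^4$ has degree $-1$). Both are short and standard; the paper's has the virtue of making the sign completely explicit via the degree-as-preimage-count picture.
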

\begin{proof}
We begin by recalling how to compute $d_4(\eta,\eta')$. We can assume
that $\eta$ and $\eta'$ agree on the 3--skeleton of $M$. Now given a
4--cell $D$ of $M$ the sections $\eta$ and $\eta'$ each give a map from 
$D\to G^+(4,5)\cong S^4$ that agree along the boundary. We can thus 
use $\eta$ and $\eta'$ to define a map from $S^4\to S^4$ buy using 
$\eta$ on the upper hemisphere and $\eta'$ on the lower hemisphere. 
The degree of this map is the value of $d_4(\eta,\eta')$ on the 4--cell 
$D$.

We may isotope $\eta$ and $\eta'$ so that the 1--manifold
$\gamma(\eta,-\eta')$ is disjoint from the 3--skeleton of $M$ and
intersect the 4--skeleton transversely. It is clear that we may
compute the degree of the above mentioned map on $D$ as the
intersection number of $D$ and $\gamma(\eta,-\eta')$. (You can pick a
trivialization of $TM$ over $D$ so that $\eta$, say, is constant, then
$\gamma(\eta,-\eta')\cap D$ is just the preimage of some point on
$S^4$.)
\end{proof}

\begin{lem}
Let $M$ be a closed oriented 5--manifold with open book decomposition
$(Y,\pi)$ whose page is a handlebody with handle of index 0,1 and 2.
Fix a plane field $\xi$ on a $Y$.  As $\xi'$ ranges over all homotopy
classes of plane field on $Y$ the class
\[
d_4(H(\xi),H(\xi'))\in H^4(M,\Z)
\]
ranges over all of $H^4(M;\Z)$. \\
In particular, every hyperplane field $\eta$ on $M$ is homotopic, over
the 4--skeleton of $M$, to $H(\xi')$ for some plane field $\xi'$ on
$Y$.
\end{lem}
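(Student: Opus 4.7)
The plan is to apply Lemma~\ref{lem:recast} to realize $d_4(H(\xi),H(\xi'))$ as the Poincar\'e dual of an explicit 1-cycle, then vary $\xi'$ to sweep out $H_1(Y;\Z)$, and finally show that this surjects onto $H_1(M;\Z)$ using the handle hypothesis on the page. The three ingredients are the location of the coincidence cycle, the classification of 2-plane fields on 3-manifolds, and a homological computation.

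To locate the 1-cycle, I would first observe that on $M-N$ both $H(\xi)$ and $H(\xi')$ equal $\ker d\pi$ with the same orientation, so they cannot be anti-parallel there and $\gamma(H(\xi),-H(\xi'))$ must live in $N=Y\times D^2$. Inside $N$ we have $H(\xi)=\ker\beta$ and $H(\xi')=\ker\beta'$ with $\beta=f(r)d\theta+g(r)\alpha$ and $\beta'=f(r)d\theta+g(r)\alpha'$, and the anti-parallel condition $\beta=c\beta'$ with $c<0$ separates along the independent directions $d\theta$ and $T^*Y$ to force $(1-c)f(r)=0$ and $g(r)(\alpha-c\alpha')=0$. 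For $r>0$ we have $f(r)>0$, forcing $c=1$, which contradicts $c<0$, so the coincidence set requires $r=0$; there $\beta=\alpha$ and $\beta'=\alpha'$, and $\alpha=c\alpha'$ with $c<0$ means exactly $\xi(y)=-\xi'(y)$. After a small generic perturbation of $H(\xi')$ the resulting transverse 1-cycle $\gamma(H(\xi),-H(\xi'))$ is homologous in $M$ to $\iota_*[\gamma(\xi,-\xi')]$, where $\iota\co Y\hookrightarrow M$, so Lemma~\ref{lem:recast} gives
\[
d_4(H(\xi),H(\xi')) = \mathrm{PD}_M\!\bigl(\iota_*[\gamma(\xi,-\xi')]\bigr).
\]

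Next, standard obstruction theory on the closed oriented 3-manifold $Y$ shows that as $\xi'$ ranges over plane fields on $Y$ the primary obstruction $d_2(\xi,\xi')\in H^2(Y;\Z)$ achieves every class, and its Poincar\'e dual is represented by $[\gamma(\xi,-\xi')]\in H_1(Y;\Z)$. Hence the image of $\xi'\mapsto d_4(H(\xi),H(\xi'))$ is exactly $\mathrm{PD}_M\bigl(\iota_*H_1(Y;\Z)\bigr)$, and it remains to show that $\iota_*\co H_1(Y;\Z)\to H_1(M;\Z)$ is surjective.

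This surjectivity is the main obstacle and the only place the handle hypothesis enters. Since the page $X$ has only handles of index $\le 2$, its cellular cochain complex vanishes in degrees $\ge 3$, so $H^3(X)=0$ and Lefschetz duality gives $H_1(X,\partial X)\cong H^3(X)=0$; the long exact sequence of the pair $(X,Y)$ then produces a surjection $H_1(Y)\to H_1(X)$. To push this up to $M$ I would use that $M-Y$ is the mapping torus $T_\phi$ of $\phi\co X\to X$: a generic loop in $M$ is transverse to the codimension-2 submanifold $Y$ and can be pushed off it, and in $T_\phi$ such a loop is homologous to a loop in a single fiber $X$ together with a multiple of the $S^1$-fiber class. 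The fiber class bounds the meridian disk $\{\mathrm{pt}\}\times D^2\subset N$, hence is null in $H_1(M)$, so every class in $H_1(M)$ comes from $H_1(X)$ and therefore from $H_1(Y)$. The final sentence of the lemma then drops out by standard obstruction theory: given any hyperplane field $\eta$, choose $\xi'$ with $d_4(H(\xi),H(\xi'))=-d_4(\eta,H(\xi))$ so that $d_4(\eta,H(\xi'))=0$, making $\eta$ homotopic to $H(\xi')$ over the 4-skeleton.
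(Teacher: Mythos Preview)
Your proof is correct and follows the same overall strategy as the paper: reduce $d_4$ to a 1--cycle via Lemma~\ref{lem:recast}, identify it with the push-forward of the coincidence cycle $\gamma(\xi,-\xi')$ on $Y$, and show $\iota_*\co H_1(Y)\to H_1(M)$ is onto using the handle hypothesis. The execution differs in two technical respects that are worth noting.

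First, the coincidence-locus computation. The paper introduces a variant $\widetilde H$ with modified cut-off functions $\widetilde f,\widetilde g$ precisely in order to force transversality; this produces an extra $\xi'$-independent piece $\gamma''$ in the cycle. You instead work directly with $H$ and appeal to a ``small generic perturbation.'' Your cleaner formula $d_4(H(\xi),H(\xi'))=\mathrm{PD}_M(\iota_*[\gamma(\xi,-\xi')])$ is in fact correct: once $\xi$ and $-\xi'$ are transverse on $Y$, the sections $H(\xi)$ and $-H(\xi')$ are already transverse along $Y\times\{0\}$, because near $r=0$ one has $\beta=x_1\,dx_2-x_2\,dx_1+\alpha$ and the $D^2$-directional derivatives supply the two missing normal directions. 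That one-line check is what you should give instead of invoking a perturbation, since without it there is no a~priori reason the homology class survives the perturbation. The paper's $\widetilde H$ is a more roundabout way of securing the same transversality.

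Second, the surjectivity of $\iota_*$. The paper argues by direct handle counting: $M$ is obtained from a thickened page by handles of index $\geq 3$, and $X$ from $Y$ by upside-down handles of index $\geq 2$. You instead use Lefschetz duality $H_1(X,\partial X)\cong H^3(X)=0$ for the first step and a Wang-type argument (loop in $T_\phi$ $=$ loop in a fiber plus an $S^1$-section killed by the meridian disk) for the second. The two arguments are equivalent; yours is perhaps more portable to other settings.
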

\begin{proof}
Using Lemma~\ref{lem:recast} we prove the first statement by showing
that $[\gamma(H(\xi),-H(\xi'))]$ ranges over all of $H_1(M;\Z)$ as
$\xi'$ ranges over all homotopy classes of plane field on $Y$. To this
end we first observe that the inclusion map
\[
i\co Y\to M
\]
induces a surjection
\[
i_*\co H_1(Y;\Z)\to H_1(M;\Z).
\]
To see this let $X$ be a page of the open book $(Y,\pi)$. A
neighborhood $V_0$ of $X$ can be identified with $X\times [0,1]$ and
$V_1=M\setminus V_0$ is also homeomorphic to $X\times [0,1]$. Notice
that $V_0$ and $V_1$ are handlebodies with handles of index less than
3. So $M$ is built from $V_0$ by adding handles of index greater than
2 (that is the handles in $V_1$ turned upside down). Thus the
inclusion of $V_0$ into $M$ induces an isomorphism on the first
homology $H_1(V_0;\Z)\to H_1(M;\Z)$. Of course the inclusion of $X$
into $V_0$ is a homotopy equivalence so $H_1(X;\Z)\cong
H_1(M;\Z)$. Finally notice that $X$ can be build from $Y=\partial X$
by attaching 4--dimensional handles of index 2, 3 and 4 (the handles in
$X$ turned upside down). Thus the inclusion map gives a surjection of
$H_1(Y,\Z)$ onto $H_1(X;\Z)$. The claim follows.

We now define a map $\widetilde H$ that is a slight variant of
$H$. Recall in the definition of $H$ we used the neighborhood
$N=Y\times D^2$ of $Y$ in $M$, where $D^2$ was the unit disk. We also
had functions $f(r)$ and $g(r)$ where $f(r)=r^2$ near 0 and is
constantly 1 near 1 and non-decreasing, and $g(r)=1$ near 0 and is 0
near 1 and non-increasing. We now specify that $g$ should be zero on
only the interval $[1/2,1]$. Now choose $\widetilde{g}(r)$ to be a
non-increasing function that is 1 near 0 and 0 near 1, but is 0 only
on $[3/4,1]$. For the moment let $\widetilde{f}=f$, but we will
perturb it later. Define $\widetilde H$ in the same manner as $H$
except use $\widetilde{f}$ and $\widetilde g$ in place of $f$ and
$g$. Notice that $H(\xi)$ and $\widetilde{H}(\xi)$ are homotopic as
hyperplane fields. Moreover, on $(Y\times \{(r,\theta)|
r\in[1/2,3/4)\})\subset M$, $H(\xi)$ and $\widetilde{H}(\xi')$ give
disjoint section. Thus $\widetilde{H}(\xi)$ can be perturbed, relative
to the region $(Y\times \{(r,\theta)| r\in[1/2,3/4)\})\subset M$, so
that it is transverse to $H(\xi)$ over the complement of $Y\times
\{(r,\theta)| r<1/2\}$. From now on we will denote by
$\widetilde{H}(\xi)$ this perturbed hyperplane field. Notice that
$H(\xi')$ for any other plane field $\xi'$ agrees with $H(\xi)$ on the
complement of $Y\times \{(r,\theta)| r<1/2\}$. Thus $H(\xi')$ is
transverse to $\widetilde{H}(\xi)$ on the complement of $Y\times
\{(r,\theta)| r<1/2\}$.

We claim with the appropriate choice of $\widetilde{f}$ and
$\widetilde{g}$, $H(\xi')$ will be transverse to $\widetilde{H}(\xi)$
on all of $M$ if $\xi$ and $\xi'$ are transverse on $Y$. To achieve
this we choose $\widetilde{f}$ to be zero only on $[0,1/2]$ (it is
still non-decreasing and 1 near 1). It is now a simple exercise to see
that $\widetilde{H}(\xi)$ and $H(\xi')$ are transverse on $Y\times
\{(r,\theta)| r<1/2\}$. (Though the exercise is somewhat easier if one
uses the dual picture to think of the hyperplane fields as unit vector
fields.)

Obstruction theory as discussed above shows $\xi$ and $\xi'$ have a
difference class $d_2(\xi,\xi')$ that obstructs homotoping them 
on the 2--skeleton of $Y$ and it is Poincar\'e dual to the
1--dimensional homology class given by the locus where they agree,
which we denote by $\gamma'$. From construction it is clear that
$\widetilde{H}(\xi)$ and $H(\xi')$ on $Y\times \{(r,\theta)| r<1/2\}$
only agree along $\gamma'$. So the Poincar\'e dual of
$d_4(\widetilde{H}(\xi),H(\xi'))$ is given by the homology class of
$\gamma'\cup \gamma''$ where $\gamma''$ is a 1--manifold in the
complement of $Y\times \{(r,\theta)| r<1/2\}$. Notice that $\gamma''$
is independent of $\xi'$. Moreover we know that by the appropriate
choice of $\xi'$ we can realize any homology class in $H_1(Y,\Z)$ as
the Poincar\'e dual of the difference class between $\xi'$ and
$\xi$. Thus we see that any element of $H_1(M;\Z)$ can be realized as
the Poincar\'e dual of the difference class
$d_4(\widetilde{H}(\xi),H(\xi'))=d_4(H(\xi),H(\xi'))$.

For the last statement recall that the difference class $d_4$ satisfies 
\[
d_4(\eta_1,\eta_3)=d_4(\eta_1,\eta_2)+d_4(\eta_2,\eta_3).
\]
Given a hyperplane field $\eta$ the first part of the proof guarantees
a plane field $\xi'$ such that
\[
d_4(H(\xi),H(\xi'))=-d_4(\eta,H(\xi)).
\] 
So we see that
$d_4(\eta,H(\xi'))=d_4(\eta,H(\xi))+d_4(H(\xi),H(\xi'))=0$. Thus
$H(\xi')$ is homotopic to $\eta$ over the 4--skeleton of $M$.
\end{proof}

Returning to the proof of Theorem~\ref{getall} we note that given any
hyperplane field $\eta$ on $M$ we can find some plane field $\xi$ on
$Y$ such that $H(\xi)$ is homotopic to $\eta$ on the 4--skeleton of
$M$. We are left to see that $\xi$ can be chosen so that the homotopy
can be extended over the 5--skeleton. Recall from above that if two
hyperplane fields agree on the 4--skeleton then the obstruction to
homotoping them on the 5--skeleton is a class in
$H^5(M;\Z/2\Z)=\Z/2\Z$.

We claim that if $\xi$ and $\xi'$ are homotopic over the 2--skeleton of
$Y$ and their obstruction to homotopy over the 3--skeleton is odd (this
obstruction is an element of $H^3(Y;\Z)=\Z$ and is well defined modulo
the divisibility of the Euler class of $\xi$), then $H(\xi)$ and
$H(\xi')$ are homotopic over the 4--skeleton of $M$ but
$d_5(H(\xi),H(\xi'))\not=0$. Thus if $\eta$ is homotopic to $H(\xi)$
over the 4--skeleton then it will be homotopic to either $H(\xi)$ or
$H(\xi')$ over all of $M$.

To verify the claim we begin by choosing representatives of $\xi$ and
$\xi'$ that agree except on the 3--handle of $Y$. Thus we know that
$H(\xi)$ and $H(\xi')$ agree on the complement of $N=Y\times D^2$. To
construct $M$ from the complement of $N$ we think of a handle
decomposition of $N$, coming from $Y$, turned upside down. Thus $M$ is
constructed from $\overline{M-N}$ by attaching a 2--handle, some 3 and
4--handles and a 5--handle. The 5--handle comes from the 3--handle of
$Y$ and thus we see that $H(\xi)$ and $H(\xi')$ agree on all of $M$
except the 5--handle $D^5$.

From the set up we know $D^5$ is the product of the 3--dimensional
3--handle $D^3$ with $D^2$. The plane fields $\xi$ and $\xi'$ give maps
from $D^3$ to $G^+(2,3)\cong S^2$ that agree on the boundary of
$D^3$. The obstruction to finding a homotopy of $\xi$ to $\xi'$ is an
element of $H^3(Y,\Z)$ that evaluates on $D^3$ to the ``Hopf
invariant" of the map $S^3\to G^+(2,3)\cong S^2$ given by $\xi$ on the
upper hemisphere of $S^3$ and $\xi'$ on the lower hemisphere. Denote
this map $h\co S^3\to S^2$.

Similarly the hyperplane fields $H(\xi)$ and $H(\xi')$ each give a map
from $D^5\cong D^3\times D^2$ to $G^+(4,5)\cong S^4$ and they agree on
$\partial D^4$. The obstruction to finding a homotopy of the
hyperplane field $H(\xi)$ to $H(\xi')$ is the homotopy class of the
map $H\co S^5\to S^4$ defined by $H(\xi)$ on the upper hemisphere and by
$H(\xi')$ on the lower hemisphere.

We claim the map $H\co S^5\to S^4$ factors through a map homotopic to
the map obtained form $h\co S^3\to S^2$ by a ``double
suspension". Recall that the suspension of a space $X$ is the quotient
of $X\times[0,1]$ by $X\times \{0\}$ and $X\times \{1\}$. Given a
function $f\co X\to X'$ there is clearly an induced suspension map
$S(f)\co S(X)\to S(X')$. It is well known, and easy to see, that
$S^n\cong S(S^{n-1})$. So we are claiming that our map $H$ factors
through a map $k\co S^5\to S^4$ that is homotopic to $S^2(h)$, in
other words there is a map $k'\co S^5\to S^5$ such that $H=k\circ
k'$. Moreover $k'_*\co \pi_5(S^5)\to \pi_5(S^5)$ is an
isomorphism. The Freudenthal suspension theorem implies that the
double suspension map from $\pi_3(S^2)\to \pi_5(S^4)$ is surjective
(here we have ignored base points, but due to the simple connectivity
of the spaces their inclusion would not alter the statement). Thus, as
$\pi_3(S^2)\cong \Z$ and $\pi_5(S^4)\cong\Z_2$, we see that $S^2(h)$
is not null homotopic and represents the generator of
$\pi_5(S^4)$. Thus $k$, which is homotopic to $S^2(h)$, induces a
surjective map $k_*\co \pi_5(S^5)\to \pi_5(S^4)$. As $k'_*$ gives an
isomorphism on $\pi_5(S^5)$ we see $H_*$ is surjective and hence $H\co
S^5\to S^4$ represents the generator of $\pi_5(S^4)$. Thus
$d_5(H(\xi),H(\xi'))$ is the mod 2 reduction of 3--dimensional
obstruction $d_3(\xi,\xi')$. This finishes the proof.

So we are left to justify the claim about $H$ and $S^2(h)$. We first
note that the suspension can be described in terms of
joins. Specifically, given a space $X$ its suspension is $S(X)\cong
X*S^0$. (Recall the join $X*Y$ of $X$ and $Y$ is obtained from
$X\times Y\times [0,1]$ where each of the sets $X\times \{y\}\times
\{0\}$ and $\{x\}\times Y\times \{1\}$ are identified to a point.) As
we know that $S^0*S^0=S^1$ it is clear that the double suspension can
be written $S^2(X)=X*S^1$. Now given a map $f\co X\to Y$ the double
suspension map $S^2(f)\co X*S^1\to Y*S^1$ is constructed as
follows. We begin by defining the map $X\times S^1\times [0,1]\to
Y\times S^1\times [0,1]$ by $(x,\theta, t)\mapsto (f(x),\theta,
t)$. Composition with the quotient map $Y\times S^1\times [0,1]\to
Y*S^1$ we get a map $X\times S^1\times [0,1]\to Y*S^1$. This map
clearly descends to a map $X*S^1\to Y*S^1$ which we define to be
$S^2(f)$.

In our case we start with the map $h\co S^3\to S^2$ and the map $H\co
S^5\to S^4$. These maps are defined with respect to some
trivialization of the tangent space $TY$ over the 3--handle $D^3$ and
of the tangent space $TM$ over $D^5$, respectively. Recall
$D^5=D^3\times D^2$ and over $D^5$ we have $TM|_{D_5}= TY_{D^3}\oplus
TD^2_{D^2}$.  Notice that the unit sphere $S^4$ in $\R^5$ is naturally
represented as the join of $S^2$ and $S^1$ where $S^2$ is the unit
sphere in the first 3 coordinates of $\R^5$ and the $S^1$ is the unit
sphere in the last 2 coordinates. Thus we me think of the
$S^4$--bundle in $TM|_{D^5}$ as a fiber wise join of the unit sphere
bundle in $TY|_{D^3}$ and the unit sphere bundle in $TD^2$.

Recall that the $S^5$ in the domain of $H$ is obtained by gluing two
copies of the 5--handle $D^5$ together. Moreover, $D^5=D^3\times D^2$
where $D^3$ is the 3--handle in $Y$ and $D^2$ is the fiber in the
neighborhood $N$ of $Y$. We glue the two copies of $D^3\times D^2$
together in two stages. We first glue along $(\partial D^3)\times
D^2$. This yields $S^3\times D^2$. Writing $D_1$ and $D_2$ for the two
copies of $D^3$ glued to form $D^3$ we see that $S^5$ is obtained from
$S^3\times D^2$ by gluing $D_1\times \{\theta\}$ to $D_2\times
\{\theta\}$ for $\theta\in \partial D^2$.  In addition we see that the
join $S^3*S^1$ is obtained from $S^3\times D^2$ (this is $S^3\times
S^1\times [0,1]/\sim$ where each $\{p\}\times S^1\times \{t\}$ is
collapsed to a point) by collapsing each $S^3\times \{\theta\}$ to a
point. Thus we clearly see that there is a map $k'\co S^5\to S^5$
obtained by thinking of the domain as two copies of $D^3\times D^2$
glued together and the image space as $S^3*S^1$. (Specifically the
first $S^5$ is obtained from $S^3\times D^2$ by partially collapsing
each $S^3\times \{\theta\}$ while the second is obtained by completely
collapsing the spheres.) This is clearly a degree 1 map and hence
induces an isomorphism on the homotopy groups of $S^5$. Below we will
denote $S^5$ created via the first method by $S^5_1$ and via the
second method as $S^5_2$. Clearly $S^2(h)\co S^5_2\to S^4$ and $H\co
S^5_1\to S^4$. We are now left to see that $H= k\circ k'$ where $k$ is
homotopic to $S^2(h)$.

Representing $S^4$ as $S^2*S^1$ from the splitting of $TM|_{D^5}$
above we can consider ``coordinates'' on $S^2*S^1$ to be
$(p,\theta,t)\in S^2\times S^1\times [0,1]$ as discussed above. Using
coordinates $(p,r,\theta)$ on $S^3\times D^2$, where $(r,\theta)$ are
polar coordinates on $D^2$, we see them map $H$ is given by
\[
(p,r,\theta)\mapsto\left(h(p),\theta, \frac 2\pi \tan^{-1}\frac{f(r)}{g(r)}\right).
\]
(To see this notice that we can map $S^2\times S^1\times [0,1]$ to the
unit sphere in $\R^5$ by the map $(p,\theta, t)\mapsto (p\cos
\frac{\pi}2 t, \theta \sin \frac\pi 2 t)$, where we think of $S^2$ is
the unit sphere in the first 3 coordinates of $\R^5$ and $S^1$ as the
unit sphere in the last 2 coordinates. Now the map $H$ from $S^3\times
D^2$ to $S^4\subset\R^5$ is given by $(p,r,\theta)\mapsto
(g(r)h(p),f(r)(\cos \theta, \sin \theta))$. So if $g$ and $f$ are
chosen so that $g^2+f^2=1$ we get the above representation of $H$ when
using the ``join coordinates" on $S^4$.)

The map $H\co S^5_1\to S^4$ descends to a map $k\co S^5_2\to S^4$ so
that $H=k\circ k'$. Now choosing a homotopy from $r\mapsto \frac 2\pi
\tan^{-1} \frac{f(r)}{g(r)}$ to the identity map $r\mapsto r$ we have an induced
homotopy $k\co S^5_2\to S^4$ to $S^2(h)\co S^5_2\to S^4$. Thus
completing the proof.
\end{proof}

\subsection{Almost contact structures}\label{sec:acs}

Recall an almost contact structure on an oriented 5--manifold $M$ is a
reduction of the structure group of $TM$ from $SO(5)$ to $U(2)\times
\mathbf{1}$. This is equivalent to a choice of homotopy class of
hyperplane field $\eta$ on $M$ together with a homotopy class of
complex structure $J$ on $\eta$. We will always think of an almost
contact structure as a pair $(\eta, J)$, up to homotopy.

To study the homotopy classes of almost contact structures on $M$ we
consider the bundle associated to the tangent bundle $TM$ with fiber
$SO(5)/U(2)$
\[
\xymatrix{\ar @{} [dr]
SO(5)/U(2)\ar[r] & E \ar[d] \\
       & M.       }
\]
Homotopy classes of sections of this bundle correspond to homotopy
classes of almost contact structures. Using obstruction theory to
understand the homotopy classes of sections of this bundle we need to
know the homotopy type of the fiber. To this end we recall that
$SO(5)/U(2)$ is diffeomorphic to $\C P^3$, see for example
\cite{Geiges08}. Thus all the relevant homotopy groups are zero except
$\pi_2(SO(5)/U(2))\cong \Z$. From this we see that the first and only
obstruction to homotoping one almost contact structure to another is
in $H^2(M;\Z)$. In particular, we have the following useful
observation.
\begin{lem}\label{lem:homotope} 
If two almost contact structures on the same 5--manifold $M$ are
homotopic over the 2--skeleton of $M$ then they are homotopic.
\end{lem}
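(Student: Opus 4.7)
The plan is to argue that the lemma is an immediate consequence of the obstruction-theoretic reduction already set up in the paragraph preceding it. Almost contact structures on $M$ correspond to sections of the bundle $E\to M$ with fiber $SO(5)/U(2)\cong \C P^3$, and a homotopy between two such sections is built skeleton-by-skeleton, with a single obstruction in $H^k(M;\pi_k(\C P^3))$ at each stage. The local coefficient system is untwisted because the fiber $\C P^3$ is simply connected and the structure group of $E$ is connected.

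The key input is the computation $\pi_k(\C P^3)=0$ for $k\in\{0,1,3,4,5\}$ with $\pi_2(\C P^3)=\Z$. The vanishing of $\pi_0$ and $\pi_1$ is standard; for $k\geq 3$ I would invoke the long exact sequence of the Hopf fibration $S^1\to S^7\to \C P^3$, which combined with $\pi_k(S^1)=0$ for $k\geq 2$ yields $\pi_k(\C P^3)\cong \pi_k(S^7)$, and $\pi_k(S^7)=0$ for $k\leq 6$ by the Hurewicz theorem.

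Now suppose $(\eta_1,J_1)$ and $(\eta_2,J_2)$ are homotopic over the 2-skeleton of $M$. Then the primary obstruction class in $H^2(M;\Z)$ to homotoping the corresponding sections already vanishes. The successive obstructions to extending the homotopy over the $3$-, $4$-, and $5$-skeleta of $M$ lie in $H^k(M;\pi_k(\C P^3))=0$ for $k=3,4,5$, hence vanish automatically. Since $\dim M=5$ no further obstructions arise, and the homotopy extends over all of $M$.

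I do not anticipate any genuine difficulty; the proof is pure obstruction-theoretic bookkeeping once the homotopy groups of $\C P^3$ are in hand. The only point worth mentioning for honesty's sake is the triviality of the $\pi_1(M)$-action on the fiber, but this is automatic: $\C P^3$ is simply connected, so it is $n$-simple for every $n$, and the structure group $SO(5)$ being connected means that any loop in $M$ acts on each $\pi_k(\C P^3)$ by the identity.
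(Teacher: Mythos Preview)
Your proposal is correct and follows essentially the same approach as the paper: the lemma is deduced from obstruction theory for sections of the $SO(5)/U(2)\cong\C P^3$ bundle, using that $\pi_k(\C P^3)=0$ for $k\in\{3,4,5\}$ so that the only obstruction lies in $H^2(M;\Z)$. Your write-up simply supplies more detail than the paper (the Hopf-fibration computation of $\pi_k(\C P^3)$ and the remark on triviality of the local coefficient system), but the argument is the same.
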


\subsection{Almost contact structures and open book decompositions}\label{sec:acsobd}

Recall that in Section~\ref{hypfields} a map
\[
H_{(Y,\pi)}\co  \{\text{hyperplanes on $Y$}\}\to \{\text{hyperplanes on $M$}\}
\]
was assigned to an open book decomposition $(Y,\pi)$ of a 5--manifold
$M$. We say that an open book $(Y,\pi)$ \dfn{supports} an almost
contact structure $(\eta,J)$ if there is some hyperplane $\xi$ on the
binding $Y$ such that $\eta$ is (homotopic to) $H_{(Y,\pi)}(\xi)$ and
the plane field $\xi$ is a complex sub-bundle of $H_{(Y,\pi)}(\xi)$ on
$Y$ (that is $\xi$ on $Y$ is $J$ invariant).  Notice that this implies
that there is a homotopy of $(\eta,J)$ so that $\eta$ is transverse to
$Y$ and outside of a small neighborhood of $Y$ is tangent to the pages
of $(Y,\pi)$ and thus defines an almost complex structure on the
pages.

\begin{thm}\label{allacs}
Given an almost contact structure $(\eta, J)$ on a closed oriented
5--manifold, there is an open book decomposition $(Y,\pi)$ with pages
having a handle decomposition with handles of index less than or equal
to two that supports the given almost contact structure.

Moreover there is an overtwisted contact structure $\xi$ on $Y$ such
that $H(\xi)=\eta$ and one can assume that there is a neighborhood
$N=Y\times D^2$ of the binding $Y$ such that $\eta$ is tangent to the
pages of the open book outside of $N$ and at each point of $N$ the
plane $\xi$ is a $J$--complex sub-bundle of $H(\xi)$.
\end{thm}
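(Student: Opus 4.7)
The plan is to proceed in four stages: produce a suitable open book decomposition of $M$, recognize $\eta$ as $H(\xi)$ via Theorem~\ref{getall}, upgrade $\xi$ to an overtwisted contact structure, and finally adjust the complex structure data so the resulting almost contact structure is homotopic to $(\eta,J)$.

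For the first stage I would begin with Quinn's theorem, which supplies some open book decomposition of $M$. A page is a compact $4$-manifold with nonempty boundary, so it carries no $4$-handles; to eliminate any remaining $3$-handles I would stabilize the open book, for instance by iteratively forming connect sums of $M$ with copies of $S^5$ equipped with the open book of Theorem~\ref{s5ex} via Lemma~\ref{obdcs}, followed by standard handle slides and cancellations. The outcome is an open book $(Y,\pi)$ of $M$ whose page $X$ is a $4$-dimensional handlebody built from $0$-, $1$-, and $2$-handles only.

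With $(Y,\pi)$ in hand, Theorem~\ref{getall}(2) produces a plane field $\xi_0$ on $Y$ with $H(\xi_0)\simeq \eta$. Martinet's theorem deforms $\xi_0$ through plane fields to a contact structure, and Theorem~\ref{otclass} (or a direct full Lutz twist along a transverse unknot) allows me to assume this contact structure $\xi$ is overtwisted, while Theorem~\ref{getall}(1) still gives $H(\xi)\simeq \eta$. To define the complex structure, observe that over $N = Y\times D^2$ the hyperplane field $H(\xi)$ decomposes pointwise as the sum of (a rotation of) $\xi$ with the $D^2$-factor, so I declare $J'$ to preserve $\xi$ on $N$ and act standardly on the $D^2$-factor. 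Outside $N$ the field $H(\xi)$ is tangent to the pages, and since $X$ has handles only of index $\leq 2$ the obstructions to extending an almost complex structure over $X$ vanish, so $J'$ extends to a complex structure on $H(\xi)$ over all of $M$. The resulting $(H(\xi),J')$ is supported by $(Y,\pi)$ in the sense defined in Section~\ref{sec:acsobd}.

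The main obstacle is then to match $(H(\xi),J')$ with $(\eta,J)$ as almost contact structures, not merely as hyperplane fields. By Lemma~\ref{lem:homotope} the sole obstruction lies in $H^2(M;\Z)$, concretely the difference of first Chern classes $c_1(H(\xi),J') - c_1(\eta,J)$. I would kill this using two sources of freedom: first, replacing $\xi$ by the result of further full Lutz twists along well-chosen transverse knots in $Y$, which leaves the plane field homotopy class (and hence $H(\xi)$) unchanged but alters $c_1(\xi)$; second, modifying $J'$ over the pages away from $N$ on the $2$-skeleton of $M$. Bookkeeping these two adjustments and showing that together they realize any prescribed class in $H^2(M;\Z)$ is the delicate point of the argument, and once carried out it yields the desired homotopy of almost contact structures.
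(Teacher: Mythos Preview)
Your first stage overcomplicates matters and misreads the available tools: Quinn's theorem, as cited in the paper, already produces an open book whose page has handles only of index $\leq 2$; no stabilization is needed here. The connect sum with the $S^5$ open book of Theorem~\ref{s5ex} adds $2$--handles to the page rather than cancelling $3$--handles, so your proposed mechanism would not do what you want anyway. This is a side issue, however.

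The real gap is in your final stage. Your claim that full Lutz twists ``alter $c_1(\xi)$'' is false: a full Lutz twist preserves the homotopy class of $\xi$ as an oriented plane field, and $c_1(\xi)$ coincides with the Euler class $e(\xi)$, which is a plane--field homotopy invariant. So one of your two advertised sources of freedom does not exist, and you give no argument that modifying $J'$ over the pages alone realizes every class in $H^2(M;\Z)$. The ``delicate bookkeeping'' you defer is exactly the heart of the matter, and as stated it does not go through.

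The paper avoids this difficulty by reversing the order of your construction. Rather than fixing $\xi$ first and then manufacturing a complex structure $J'$ that one must subsequently match with $J$, the paper transports the \emph{given} $J$ along the homotopy $\eta\simeq H(\xi)$ (Lemma~\ref{nearbyJ}), so that $(H(\xi),J)$ is already in the correct almost contact homotopy class from the outset. The remaining task is only to make the plane field along $Y$ a $J$--complex line. For this the paper exploits the bijection between complex structures on $TY\oplus\R$ and plane fields on $Y$ (coming from $SO(4)/U(2)\simeq S^2$): one simply \emph{reads off} a new plane field $\xi'$ from $J$ itself, takes it overtwisted, and rebuilds the hyperplane field near $Y$ using $\xi'$. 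The resulting almost contact structure then agrees with $(H(\xi),J)$ along a page, hence on the $2$--skeleton of $M$, and Lemma~\ref{lem:homotope} finishes. The key idea you are missing is to let $J$ choose $\xi'$ rather than the other way around.
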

To prove this proposition we will need a preliminary lemma.
\begin{lem}\label{nearbyJ}
If $J$ is a complex structure on the hyperplane field $\eta$ then
there is a $C^0$--neighborhood of $\eta$ in the space of hyperplane
fields such that $J$ induces a complex structure for all hyperplane
fields in this neighborhood. Moreover, the complex structures are
well-defined up to homotopy.
\end{lem}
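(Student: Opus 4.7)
The plan is to transport $J$ from $\eta$ to any nearby hyperplane field $\eta'$ via fiberwise linear isomorphisms coming from an auxiliary Riemannian metric, and to verify independence of the construction up to homotopy using the convexity of the space of metrics.

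First I would fix a Riemannian metric $g$ on $M$, which determines the $g$--orthogonal line field $\eta^\perp$ complementary to $\eta$. For $\eta'$ sufficiently $C^0$--close to $\eta$, transversality of $\eta'$ to $\eta^\perp$ is automatic: it is an open condition in the Grassmannian bundle satisfied by $\eta$ itself, and by compactness of $M$ a single uniform $C^0$--neighborhood of $\eta$ works pointwise across $M$. On this neighborhood the fiberwise orthogonal projection $P_{\eta'}\co \eta'\to \eta$ has trivial kernel and is therefore a linear isomorphism of 4--dimensional fibers at every point. I then define
\[
J' \;=\; P_{\eta'}^{-1}\circ J\circ P_{\eta'}
\]
on $\eta'$. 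The direct computation $(J')^2 = P_{\eta'}^{-1}\circ J^2\circ P_{\eta'} = -\mathrm{id}_{\eta'}$ shows that $J'$ is a genuine complex structure on $\eta'$.

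For the homotopy statement, suppose $g_0$ and $g_1$ are two auxiliary Riemannian metrics producing complex structures $J'_0$ and $J'_1$ on the same $\eta'$. The convex combination $g_t = (1-t)g_0 + t\,g_1$ is a smooth one--parameter family of Riemannian metrics, and shrinking the $C^0$--neighborhood of $\eta$ if necessary ensures that the corresponding projection $P_t\co \eta'\to\eta$ is an isomorphism uniformly in $t\in[0,1]$. Then $J'_t = P_t^{-1}\circ J\circ P_t$ is a continuous path of complex structures on $\eta'$ joining $J'_0$ to $J'_1$, so $J'$ is well defined up to homotopy. The only potential obstacle is the uniformity of the neighborhood across $M$ and across the homotopy parameter, but this is immediate from compactness of $M$ together with the fact that being a linear isomorphism is an open condition on the space of fiber maps.
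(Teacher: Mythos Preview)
Your proof is correct and takes essentially the same approach as the paper's: the paper picks an arbitrary line field $L$ transverse to $\eta$, uses the resulting projection $TM\to\eta$ to transport $J$ to nearby hyperplane fields, and notes that varying $L$ changes the result only by a homotopy. Your choice of a Riemannian metric simply produces a specific such $L=\eta^\perp$, and your convexity-of-metrics argument is a concrete realization of the paper's remark that the construction depends on $L$ only up to homotopy.
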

\begin{proof}
Given $\eta$ choose any line field $L$ on $M$ that is transverse to
$\eta$. Let $\mathcal{O}$ be a $C^0$--neighborhood of $\eta$ in the
space of hyperplane fields that consists of hyperplanes transverse to
$L$. Note that $L$ and $\eta$ can be used to define a projection
$p\co TM\to \eta$. It is clear that at any point $x\in M$ the projection
$p$ maps $\eta'_x$ isomorphically onto $\eta_x$ for any
$\eta'\in\mathcal{O}$. Thus we can use $p$ to pull $J$ back form
$\eta$ to $\eta'$. The only choice made in this construction was the
choice of $L$, but clearly as we vary $L$ this only changes the
complex structures induced on $\eta'$ by a homotopy.
\end{proof}

\begin{proof}[Proof of Theorem~\ref{allacs}]
Quinn \cite{Quinn79} proved that there is an open book decomposition
$(Y,\pi)$ of $M$ whose pages have a handle decomposition with handles
only of index less than or equal to two.  Let $(\eta, J)$ be an almost
contact structure on $M$. By Proposition~\ref{getall} there is a
hyperplane field $\xi$ on $Y$ such that the induced hyperplane field
$H(\xi)$ is homotopic to $\eta$. Using Lemma~\ref{nearbyJ} we can pull
$J$ back along this homotopy so that $H(\xi)$ also has a complex
structure $J'$. Thus $(H(\xi),J')$ is an almost contact structure that
is homotopic to $(\eta,J)$. To simplify notation we now assume our
given almost contact structure is $(H(\xi),J)$. 

We are left to see that we can arrange that $\xi$ on $Y$ is
$J$--invariant. We will do this by altering $H(\xi)$ on the
neighborhood $N$ of the binding $Y$ to construct a new almost contact
structure and then show that this is homotopic to $(H(\xi),J)$ through
almost contact structures.

Recall $N=Y\times D^2$ where $D^2$ is the unit disk in $\R^2$.  We
write $D^2$ as the union $D_{1/4}\cup A_{1/4,1/2}\cup A_{1/2, 3/4}\cup
A_{3/4,1}$, where $D_{1/4}$ is the disk of radius $1/4$ and $A_{a,b}$
denotes the annulus $\{(r,\theta): a\leq r\leq b\}$. We can homotope
$(H(\xi),J)$ so that it is independent of the coordinates on $D^2$
near $Y\times \{(0,0)\}$. Now one can use a map from $A_{3/4,1}$ to
$D$ that is the identity near the outer boundary of the annulus, a
diffeomorphism away from the inner boundary and collapses the inner
boundary to the origin, to pull $(H(\xi),J)$ back to an almost contact
structure on $Y\times A_{3/4,1}$, call the resulting almost contact
structure $(\eta', J')$. Notice that $\eta'$ on $Y\times \{p\}$ for
any point on the inner boundary of $A_{3/4,1}$ is homotopic to
$TY\oplus \R$ (where we can take $\R$ to point in the radial direction
if we like). We can use this homotopy to extend $\eta'$ to a
hyperplane field on $A_{1/2,3/4}$ so that on the inner boundary of
this annulus the hyperplane field is $TY\oplus \R$. We can also use
the homotopy to extend $J'$ over this annulus so that it is
independent of the angular coordinate on the annulus.

To continue to define our almost contact structure we need to discuss
complex structures on $TY\oplus\R$.  Recall that complex structures on
a 4--dimensional bundle correspond to reductions of the structure
group from $SO(4)$ to $U(2)$ and thus correspond to sections of a
principal $SO(4)/U(2)$ bundle.  As $SO(4)/U(2)$ is homotopy equivalent
to $S^2$ and the bundle $TY\oplus \R$ is trivial, we see that complex
structures on this bundle correspond to maps $Y\to S^2$.  We can
concretely see this by noting that a unit vector in the $\R$ direction
of $TY\oplus \R$ will be mapped by the complex structure to a unit
vector in $TY$. The span of these two vectors is a complex line in
$TY\oplus \R$ and there is a unique, up to homotopy, complex structure
on the orthogonal complement of this complex line.  Thus a unit vector
field on $Y$, or dually an oriented plane field, determines the
complex structure on $TY\oplus \R$. Clearly there is some plane field,
say $\xi'$, on $Y$ that corresponds to $J'$ on $TY\oplus \R$ (thought
of as a hyperplane field along the inner boundary of $Y\times
A_{1/2,3/4}$). We can assume that $\xi'$ is an overtwisted contact
structure.

We can now define $\eta'$ on $D_{1/4}$ using the form $f(r)\, d\theta+
g(r)\, \alpha'$, where $\alpha'$ is a contact form for $\xi'$ and $f$
and $g$ are functions analogous to the ones used in the definition of
$H(\xi)$ in Section~\ref{hypfields}. With the appropriate choice of
$f$ and $g$ this is a contact form on the interior of $D_{1/4}$ and
the complex structure discussed in the previous paragraph thought of
as defined on $\eta'$ restricted to $Y\times \partial D_{1/4}$ extends
to a complex structure compatible the contact structure $\eta'$ on the
interior of $D_{1/4}$. In particular at any point in $Y\times D_{1/4}$
the plane field $\xi\subset TM$ can be assumed to be a $J'$--complex
line in $\eta'$. Finally by our choice of $J'$ and $\xi'$ above we can
extend $\eta'$ over $Y\times A_{1/4,1/2}$ to be $TY\oplus \R$ and
extend $J'$ by the homotopy from the paragraph above.

We finally note that $\eta'$ can be homotoped on $Y\times
(A_{1/2,3/4}\cup A_{3/4,1})$ so that it is $TY\times \R$ at every
point. Thus $\eta'$ is homotopic to $H(\xi')$ and the complex
structure $J'$ has the properties stated in the theorem. We are left
to see that $(H(\xi'),J')$ is homotopic to $(H(\xi),J)$. To this end
notice that if $U$ is a neighborhood of a page of the open book
$(Y,\pi)$ then $M$ has a handle decomposition with $U$ being the union
of 0, 1, and 2--handles and the other handles being index 3 and
above. Since $(H(\xi'),J')$ and $(H(\xi),J)$ agree along the cores of
the 0, 1 and 2--handles we see from Lemma~\ref{lem:homotope} that they
are homotopic on all of $M$. 
\end{proof}

\section{Cobordisms}

This section consists of two subsections. In the first we recall the
notions of Weinstein cobordisms and Cieliebak and Eliashberg's
``Weinstein flexibility results". The following subsection examines
Morse functions on the pages of open book decompositions of
5--manifolds.

\subsection{Weinstein cobordisms and flexibility}\label{sec:weinstein}

A \dfn{cobordism} $W$ is an compact $n$--manifold with boundary
$\partial W= -\partial_-W\cup \partial_+ W$. We say that $W$ is a
cobordism from $\partial_- W$ to $\partial_+W$.  A \dfn{Morse
cobordism} is a pair $(W,f)$ where $W$ is a cobordism and $f\co W\to
\R$ is a Morse function having $\partial_\pm W$ as regular level sets.

Recall that a 1--form $\lambda$ on a manifold $W$ is called a {\em
Liouville form} if $d\lambda$ is a symplectic form. Given such a
1--form there is a unique vector field $v$ such that $\iota_vd\lambda=
\lambda$. This vector field is called the {\em Liouville vector field}
associated to $\lambda$. Notice that if $\omega=d\lambda$ is a
symplectic structure on $W$ then $\lambda$ can be recovered from $v$
and $\omega$ by $\lambda=\iota_v \omega$. We call $(\omega, v)$ a
Liouville structure on $W$ if $d\iota_v \omega=\omega$.

A \dfn{Weinstein cobordism} is a tuple $(W, \omega, v, f)$ where $W$
is a compact cobordism, $(\omega,v)$ is a Liouville structure on $W$
so that $v$ points out of $W$ along $\partial_+W$ and into $W$ along
$\partial_-W$, and $f$ is a Morse function on $W$ that is constant on
each boundary component and for which $v$ is a gradient-like vector
field. Recall $v$ is gradient-like for $f$ if there is some constant
$\delta>0$ and some metric so that
\[
df(v)\geq \delta (|v|^2+|df|^2).
\]

It is well known, see for example \cite{CieliebakEliashberg??}, that
the Weinstein structure gives $W$ a handle decomposition with handles
of index less than or equal to half the dimension of $W$. We also note
that $\iota_v \omega|_{\partial_\pm W}$ is a contact form on
$\partial_\pm W$ and thus Weinstein cobordisms are cobordisms between
contact manifolds.

Cieliebak and Eliashberg define a \dfn{flexible} Weinstein cobordism
on a 4--manifold $W$ to be one with only 0 and 1--handles or one whose
lower boundary is an overtwisted contact 3--manifold and all the
2--handles are attached along Legendrian knots that have overtwisted
disks in their complement.

\begin{thm}[Cieliebak and Eliashberg 2012, \cite{CieliebakEliashberg??}]\label{wct}
Let $(W,f)$ be a 4--dimensional Morse cobordism such that $f$ has no
critical points of index larger than $2$. Let $\eta$ be a
non-degenerate 2--form on $W$ and $w$ a vector field near
$\partial_-W$ such that $(\eta,w,f)$ is a Weinstein cobordism
structure on some neighborhood $N$ of $\partial_-W$. Suppose that the
contact structure induced by $\iota_w\eta$ on $\partial_-W$ is
overtwisted. Then there is a flexible Weinstein cobordism structure
$(\omega,v,f)$ on $W$ such that
\begin{enumerate}
\item $(\omega,v)=(\eta,w)$ on some neighborhood $N'$ of $\partial_-W$, and
\item the non-degenerate 2--forms $\omega$ and $\eta$ are homotopic on
$W$ relative to $N'$.
\end{enumerate}
\end{thm}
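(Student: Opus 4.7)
The plan is to build the Weinstein structure on $W$ by extending from the given Liouville collar of $\partial_-W$ handle by handle, using the handle decomposition provided by $f$. Since $f$ has only handles of index $0$, $1$, and $2$, I would first arrange $f$ rel a neighborhood of $\partial_-W$ so that all $0$--handles are absorbed (any $0$--handle lies in a component of $W$ disjoint from $\partial_-W$ and can be filled with a standard Weinstein $D^4$) and then reorder the remaining critical values so that all $1$--handles precede all $2$--handles. This produces an intermediate cobordism $W_1$ sitting above $N'$.

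Extending the Weinstein structure across the $1$--handles is formal: each $4$--dimensional Weinstein $1$--handle has a canonical Liouville model that glues onto any Liouville collar along its attaching $S^0$, and a relative Moser argument matches the resulting $2$--form to $\eta$ up to homotopy through non--degenerate $2$--forms. The upper contact boundary of $W_1$ remains overtwisted, because contact $0$--surgery preserves any overtwisted disk supported in the summand away from the attaching region.

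The heart of the argument is the inductive $2$--handle step. After attaching the first $k$ two--handles, we have an overtwisted contact $3$--manifold $(Y_k,\xi_k)$ on top and wish to attach a Weinstein $2$--handle along a smoothly embedded framed circle $K_{k+1}\subset Y_k$ whose framing is prescribed by $\eta$. To realize this as a Weinstein attachment, $K_{k+1}$ must be isotopic as a framed knot to a Legendrian knot whose Thurston--Bennequin framing is one less than the $\eta$--framing, and for flexibility the representative must be \emph{loose}, i.e., admit an overtwisted disk in its complement. Because $\xi_k$ is overtwisted, the h--principle for Legendrian knots in overtwisted contact $3$--manifolds (Dymara, Eliashberg--Fraser) guarantees a Legendrian representative with overtwisted disk disjoint from the knot, and successive Legendrian stabilizations---which preserve both looseness and the smooth isotopy class---adjust $tb$ to match any prescribed framing. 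Attaching a standard Weinstein $2$--handle along this loose Legendrian yields the desired extension, and the required homotopy of $\omega$ to $\eta$ through non--degenerate $2$--forms follows from obstruction theory on the $2$--handle: once the new $\omega$ agrees with $\eta$ near the attaching region, the remaining obstruction is primary and already vanishes because $\eta$ itself provides a non--degenerate extension across the handle. Concatenating the homotopies over all handles gives the global homotopy of $\omega$ to $\eta$ rel $N'$.

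The main obstacle I anticipate is propagating overtwistedness through the tower of $2$--handle attachments, so that each successive $K_{k+1}$ can in turn be realized as a loose Legendrian. This is handled by choosing the overtwisted disk witnessing looseness of $K_k$ disjoint from $K_{k+1}$ by transversality; after the $k$--th Weinstein $2$--handle is glued, the disk persists in the new upper boundary $(Y_{k+1},\xi_{k+1})$, ensuring it is overtwisted and the induction continues. The matching of smooth and contact framings, which could in principle obstruct Legendrian realization, is precisely what Legendrian stabilization provides for loose Legendrian knots in overtwisted contact $3$--manifolds, so no genuine framing obstruction arises.
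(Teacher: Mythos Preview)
The paper does not prove this statement: Theorem~\ref{wct} is quoted as a black-box result of Cieliebak and Eliashberg, with no argument supplied. So there is no ``paper's own proof'' to compare against; your outline is, in spirit, a reasonable sketch of how the result is actually established in \cite{CieliebakEliashberg??}, namely by handle-by-handle extension using the overtwisted h--principle to realize each $2$--handle as a Weinstein handle attached along a loose Legendrian.

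That said, there is a genuine gap in your $2$--handle step. You correctly identify that the attaching circle must be realized as a Legendrian with the right Thurston--Bennequin invariant, but you never address the \emph{rotation number}. The homotopy class of the almost complex structure produced by a Weinstein $2$--handle depends on both $tb$ and $\mathrm{rot}$ of the Legendrian attaching knot, and your obstruction-theoretic claim that the homotopy between $\omega$ and $\eta$ over the handle ``already vanishes because $\eta$ itself provides a non-degenerate extension'' is not correct as stated. Over a single $2$--handle $H\cong D^2\times D^2$ attached along $A\cong S^1\times D^2$, the space of non-degenerate $2$--forms has fiber $SO(4)/U(2)\simeq S^2$, and the obstruction to homotoping $\omega$ to $\eta$ rel $A$ lives in $H^2(H,A;\pi_2(S^2))\cong\Z$; this integer is precisely the discrepancy in rotation numbers. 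The fact that $\eta$ extends over $H$ only tells you that \emph{some} extension exists, not that your particular $\omega$ is homotopic to it rel $A$. The fix is to use the full h--principle for loose Legendrians to realize the prescribed pair $(tb,\mathrm{rot})$ dictated by $\eta$, not just the framing.

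A smaller point: Legendrian stabilization only \emph{decreases} $tb$, so ``successive stabilizations \ldots\ adjust $tb$ to match any prescribed framing'' is not literally true. What you need is that in an overtwisted contact $3$--manifold every formal Legendrian class (any knot type, any $tb$, any $\mathrm{rot}$, subject to the usual parity constraint) is realized by a loose Legendrian; one can first invoke the h--principle to obtain a loose representative with large $tb$, and then use positive and negative stabilizations to lower $tb$ while simultaneously steering $\mathrm{rot}$ to the required value. Phrased this way the argument goes through and also closes the rotation-number gap above.
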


We now discuss a slight refinement of the above theorem that follows
directly from the proof of that theorem, \cite{Cieliebak}. We first
establish some notation. A \dfn{relative cobordism} $W$ is a compact
$n$--manifold with boundary $\partial W=\partial_-W\cup \partial_+ W
\cup \partial_v W$ where $\partial_v W\cong -[0,1]\times \partial
(\partial_- W)\cong [0,1]\times \partial (\partial_+W)$. We say $W$ is
a relative cobordism from the manifold with boundary $\partial_-W$ to
the manifold with boundary $\partial_+W$. We say the \dfn{vertical
boundary} of $W$ is $\partial_vW$. (There is of course an analogous
notion of relative Morse cobordism.)

A \dfn{relative Weinstein cobordism} structure on a relative cobordism
$W$ is a triple $(\omega, v, f)$ as in the ordinary definition of
Weinstein cobordism and in addition, $f$ must have no critical points
near $\partial_v W$ and $v$ must be tangent to $\partial_vW$. As in
the case of Weinstein cobordisms a relative Weinstein cobordism is
flexible if it only has 0 and 1--handles or if on its lower boundary
the induced contact structure is overtwisted and all the 2--handles
are attached along Legendrian knots that have overtwisted disks in
their complement.

\begin{thm}[Cieliebak and Eliashberg 2012, \cite{Cieliebak, CieliebakEliashberg??}]\label{wct2}
Let $(W,f)$ be a 4--dimensional relative Morse cobordism such that $f$
has no critical points of index larger than $2$. Let $\eta$ be a
non-degenerate 2--form on $W$ and $w$ a vector field defined near
$\partial_-W\cup \partial_v W$ such that $(\eta,w,\phi)$ is a
Weinstein cobordism structure on some neighborhood $N$ of
$\partial_-W$ and on some neighborhood $N_v$ of $\partial_vW$. Suppose
that the contact structure induced by $\iota_w\eta$ on $\partial_-W$
is overtwisted. Then there is a flexible Weinstein cobordism structure
$(\omega,v,f)$ on $W$ such that
\begin{enumerate}
\item $(\omega,v)=(\eta,w)$ on some neighborhood $N'$ of $\partial_-W$
and $N'_v$ of $\partial_vW$ ($N'$ can be any subset of $N$ on which
$(\eta,w)$ gives a Weinstein cobordism structure and similarly for
$N'_v$ and $N_v$), and
\item the non-degenerate 2--forms $\omega$ and $\eta$ are homotopic on
$W$ relative to $N'\cup N'_v$.
\end{enumerate}
\end{thm}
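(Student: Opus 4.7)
The plan is to re-run the argument proving Theorem~\ref{wct} while ensuring every modification is supported in the interior of $W$, away from the vertical boundary. First, I would exploit the hypotheses that $f$ has no critical points near $\partial_v W$ and that $w$ is tangent to $\partial_v W$: since the flow of $w$ preserves $\partial_v W$ and carries level sets of $f$ to level sets of $f$, the neighborhood $N_v$ on which $(\eta,w,f)$ is already a Weinstein structure can be adjusted along this flow to a neighborhood $N'_v$ of $\partial_v W$ whose intersection with every level set of $f$ carries a compatible Weinstein germ. A symmetric remark applies near $\partial_- W$, producing $N'$.

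Next, I would follow the inductive construction in the proof of Theorem~\ref{wct} up the filtration of $W$ by regular sublevel sets of $f$. Each handle of $f$ sits in the interior of $W \setminus N'_v$ by the Morse hypothesis near $\partial_v W$. So each step of the induction---modeling a Weinstein handle on a neighborhood of a critical point, homotoping the non-degenerate 2-form $\eta$ to agree with the model through non-degenerate 2-forms, and, for an index-$2$ handle, using Murphy's h-principle to isotope the attaching sphere to a loose Legendrian that bounds an overtwisted disk in its complement---can be carried out with support in a small interior neighborhood of the handle. This leaves $(\eta, w)$ untouched on $N' \cup N'_v$, producing the desired flexible Weinstein structure $(\omega, v, f)$ and automatically yielding a homotopy of non-degenerate 2-forms from $\eta$ to $\omega$ satisfying conclusion (2).

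The main obstacle, and the reason this is not a pure black-box corollary of Theorem~\ref{wct}, is keeping the overtwistedness hypothesis alive along the handle filtration while working relative to $\partial_v W$. For this I would appeal to the relative form of Murphy's h-principle for loose Legendrians used in Cieliebak--Eliashberg: a formal Legendrian attaching sphere can be isotoped to a genuine loose Legendrian keeping things fixed on a neighborhood of $\partial_v W$, provided a loose chart can be found in the complement of that neighborhood. Since the contact structure on $\partial_- W$ is overtwisted by hypothesis and the overtwisted disk lies in the interior of $\partial_- W$, it persists in the complement of each successive handle attachment, and hence a loose chart disjoint from $\partial_v W$ remains available at every stage. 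Concretely the argument is the one written out in \cite{Cieliebak}, verified to be local relative to $\partial_v W$.
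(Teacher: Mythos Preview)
Your approach matches the paper's: the paper provides no independent proof of this relative version, simply remarking that it ``follows directly from the proof'' of the absolute case (Theorem~\ref{wct}) and citing private communication with Cieliebak. Your sketch of why the absolute argument localizes away from $\partial_v W$ is exactly the natural elaboration of that remark, and the observation that the overtwisted disk sits in the interior of $\partial_- W$ and therefore survives each handle attachment disjoint from $N'_v$ is the key point.

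There is one substantive correction. You invoke Murphy's h-principle for loose Legendrians, but that result applies to Legendrians of dimension at least $2$ in contact manifolds of dimension at least $5$. Here $W$ is $4$--dimensional, so the level sets of $f$ are contact $3$--manifolds and the attaching spheres of the $2$--handles are Legendrian \emph{knots}; there is no ``loose chart'' in this dimension. The relevant flexibility is instead the classical fact---going back to Eliashberg's classification of overtwisted contact structures and the Eliashberg--Fraser/Dymara analysis of Legendrian knots therein---that a Legendrian knot in an overtwisted contact $3$--manifold with an overtwisted disk in its complement is determined up to Legendrian isotopy by its classical invariants. This is precisely the condition appearing in the paper's definition of a flexible Weinstein cobordism on a $4$--manifold. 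With that substitution your argument goes through unchanged: the required Legendrian isotopies take place in the complement of a fixed overtwisted disk in the interior, hence can be taken relative to $N'_v$.
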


A Weinstein homotopy on a cobordism $W$ is a smooth family of
Weinstein structures $(\omega_t, v_t, f_t), t\in [0,1],$ were we allow
the functions $f_t$ to have birth-death type degenerations (that is,
they form a generic family of functions), and $(\omega_t, v_t)$ is a
smooth family of Liouville structures on $W$. We now state a slight
strengthening of Cieliebak and Eliashberg's Weinstein flexibility
theorem from \cite{CieliebakEliashberg??}. Just as for
Theorem~\ref{wct2} above, the proof of this is the same as for the
non-relative version in \cite{CieliebakEliashberg??}.

\begin{thm}[Cieliebak and Eliashberg 2012, \cite{Cieliebak, CieliebakEliashberg??}]\label{cethm2}
Let $(\omega_0,v_0, f_0)$ and $(\omega_1, v_1, f_1)$ be two flexible
Weinstein structures on a 4--dimensional relative cobordism $W$ from an
overtwisted contact structure on $\partial_- W$ to some contact
structure on $\partial_+ W$.  Let
\begin{enumerate}
\item $f_t,t\in[0,1]$, be a Morse homotopy without critical points of
index greater than two, and
\item $\beta_t, t\in [0,1]$, be a homotopy of non-degenerate 2--forms
connecting $\omega_0$ and $\omega_1$ such that there are neighborhoods
$N$ of $\partial_- W$ and $N_v$ of $\partial_v W$ and vector fields
$w_t$ defined on $N\cup N_v$ connecting $v_0|_{N\cup N'}$ to
$v_1|_{N\cup N'}$ such that $(\beta_t, w_t, f_t)$ has the structure of
a relative Weinstein cobordism when restricted to $N$ and when
restricted to $N_v$.
\end{enumerate}
Then there are sub-neighborhoods $N'$ of $\partial_-W$ in $N$ and
$N'_v$ of $\partial_v W$ in $N_v$ and there is a homotopy $(\omega_t,
v_t, f_t), t\in [0,1],$ of Weinstein structures agreeing with
$(\beta_t, w_t, f_t)$ on $N'\cup N'_v$, such that the paths of
2--forms $\beta_t$ and $\omega_t$, for $t\in [0,1]$, are homotopic
relative to $N'\cup N'_v$ and the endpoints.
\end{thm}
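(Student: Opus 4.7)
Theorem~\ref{cethm2} is the parametric analogue of Theorem~\ref{wct2}, and I would prove it by replaying the proof of that theorem with a parameter $t \in [0,1]$, invoking the parametric/relative forms of the flexibility tools developed by Cieliebak and Eliashberg in \cite{CieliebakEliashberg??} (with the relative refinement pointed out in \cite{Cieliebak}).

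First I would fix the collar data. Shrink $N$ and $N_v$ to slightly smaller neighborhoods $N'$ and $N'_v$ on which $(\beta_t, w_t, f_t)$ remains a relative Weinstein structure for every $t$. The desired homotopy $(\omega_t, v_t, f_t)$ will equal $(\beta_t, w_t, f_t)$ on $N' \cup N'_v$ by fiat, and we only need to construct it on the complement, where $f_t$ is kept as given and $\beta_t$ may be deformed. Next, handle the 0- and 1-handles of $f_t$. For a generic path, the sub-level set $W^{1}_t$ containing only critical points of index $\leq 1$ admits a flexible Weinstein structure automatically, since subcritical handles are flexible by definition and their attachment up to Weinstein homotopy is controlled by the purely smooth data in $f_t$. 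A parametric Moser-type argument extends the collar data across $W^{1}_t$ and matches it with $(\omega_0,v_0)$ and $(\omega_1,v_1)$ at the endpoints, while simultaneously deforming $\beta_t$ to agree with the chosen $\omega_t$ there.

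The main work is the 2-handles. Each 2-handle at time $t$ is attached along a Legendrian knot $\Lambda_t$ in the contact 3-manifold $(\partial_+ W^{1}_t, \xi^{1}_t)$, and since the original lower boundary is overtwisted and only subcritical handles have been attached, $\xi^{1}_t$ remains overtwisted for all $t$. The endpoint hypothesis says $\Lambda_0$ and $\Lambda_1$ are loose, and the formal Legendrian data along $\Lambda_t$ is prescribed by $\beta_t$ via compatibility with $(w_t,f_t)$ near the attaching region. Murphy's $h$-principle for loose Legendrians, in parametric and relative form, produces a family of loose Legendrian attaching spheres $\Lambda_t$ realizing the prescribed formal data; the Cieliebak--Eliashberg construction then attaches the corresponding flexible Weinstein 2-handles in a smooth family to produce $(\omega_t,v_t)$ on all of $W$. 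A final parametric Moser argument homotopes $\beta_t$ to $\omega_t$ through non-degenerate 2-forms, relative to $N'\cup N'_v$ and to $t=0,1$.

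The main obstacle will be the birth-death moments of $f_t$, where a pair of consecutive-index critical points is created or cancelled. Here the Liouville data must be deformed smoothly through a degenerate critical point, and one must check that the parametric loose-Legendrian argument survives the moment when a 2-handle is born or dies together with a 1-handle. Cieliebak--Eliashberg handle this via a standard Weinstein cancellation model; since we may arrange (by a small perturbation of $f_t$) that no birth-death occurs within $N' \cup N'_v$ and that $f_t$ has no critical points near $\partial_v W$, the model applies verbatim in the relative setting and patches the construction across the degenerations without disturbing the fixed collar data.
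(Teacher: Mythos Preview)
The paper does not actually prove Theorem~\ref{cethm2}; it is quoted as a black-box result of Cieliebak and Eliashberg, with only the remark that ``the proof of this is the same as for the non-relative version in \cite{CieliebakEliashberg??}.'' So there is no in-paper argument to compare against, and your sketch goes well beyond what the paper itself supplies.

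That said, your outline is broadly the right shape for how the Cieliebak--Eliashberg argument runs, with one genuine misstep. You invoke ``Murphy's $h$-principle for loose Legendrians'' for the 2-handle attaching spheres, but here $W$ is 4--dimensional, so the attaching spheres are Legendrian knots in a contact \emph{3--manifold}. Murphy's loose Legendrians live in contact manifolds of dimension at least five; in dimension three the correct flexibility statement is the $h$-principle for Legendrian knots whose complement contains an overtwisted disk (which follows from Eliashberg's overtwisted classification, and is exactly the condition in the paper's definition of a flexible 4--dimensional Weinstein cobordism). Your parametric argument goes through once you replace the Murphy reference with this 3--dimensional input, since the overtwisted $h$-principle is likewise available in parametric and relative form. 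The rest of your plan---subcritical handles, birth--death models, and the Moser-type homotopy of non-degenerate 2--forms---matches the Cieliebak--Eliashberg framework and needs no correction.
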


\subsection{Open books and stabilization}

Let $(Y,\pi)$ be an open book decomposition of a closed oriented
5--manifold $M$. Let $N=Y\times D^2$ be a closed neighborhood of $Y$
in $M$. We can think of $\pi$ as a fibration of $M\setminus N$ (and
$\pi$ extends over $N-Y$ by projection to the $\theta$--coordinate of
$D^2$).

We denote the pages (that is fibers of $\pi\co (M\setminus N)\to S^1$)
of the open book by $X_\theta=\pi^{-1}(\theta)$.  Fixing some
$\theta_0\in S^1$, notice that
\[
(M\setminus N)\setminus X_{\theta_0}\cong X\times [0,1]
\]
where $X=X_{\theta_0}$. Moreover there is some diffeomorphism
$\phi\co X\to X$ that is the identity near $\partial X$ so that the
mapping torus
\[
T_\phi=X\times[0,1]/(x,1)\sim(\phi(x),0)
\]
is diffeomorphic to $M\setminus N$.  

Let $f$ is a Morse function for $X_{\theta_0}$. We denote $f$ on
$X\times \{0\}$ by $f_0$ and think of $\phi^*f$ is a Morse function on
$X\times \{1\}$, which we denote by $f_1$. 
\begin{thm}\label{stabilize}
Given an open book $(Y,\pi)$ for a closed oriented 5--manifold $M$ and
a Morse function $f\co X\to \R$ on a page of the open book with critical
points of index less than or equal to 2, there is another open book
$(Y',\pi')$ with monodromy $\phi'$ and Morse function $f'\co X'\to \R$ on
its pages so that, in the notation above, when $M\setminus N'$ is cut
open along the page $X'$ to obtain $X'\times [0,1]$ the Morse function
$f_0=f'$ on $X'\times \{0\}$ and $f_1=\phi'^*f'$ on $X'\times \{1\}$
can be extended to a family of functions $f_t\co (X'\times\{t\})\to \R$,
$t\in [0,1]$, satisfying the following:
\begin{enumerate}
\item $f_t$ is either a Morse function or has a birth/death degenerate
critical point,
\item no $f_t$ has critical points if index larger than 2,
\item each $f_t$ has a unique index 0 critical point,
\item the index 0 critical is constant in $t$ and it can be
connected to the boundary by a gradient like flow line that is also 
constant in $t$ and $\phi'$ fixes a
neighborhood of the index 0 critical point and flow line.
\end{enumerate}
\end{thm}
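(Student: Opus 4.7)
My strategy is to stabilize the open book using copies of the $S^5$ open book of Theorem~\ref{s5ex} to create enough extra $2$--handles on the page to kill higher-index critical points that arise in a Cerf-theoretic interpolation. First, I boundary-sum the page $X$ with $k$ copies of $X_{\text{stab}}$ (for $k$ to be chosen large) and take the new monodromy $\phi'$ to be $\phi$ on the $X$--factor and $\phi_{\text{stab}}$ on each $X_{\text{stab}}$--factor, extended by the identity across the boundary-sum necks. By Lemma~\ref{obdcs} the result is still an open book for $M\#S^5\#\cdots\#S^5\cong M$. I then pick a Morse function $f'$ on $X' = X\,\natural\,(X_{\text{stab}})^{\natural k}$ that restricts to $f$ on $X$ and to $f_{\text{stab}}$ on each $X_{\text{stab}}$--summand, cancel all but one of the resulting index $0$ critical points against $1$--handles in the connecting necks, and arrange that $\phi'$ is the identity in a neighborhood of the surviving index $0$ critical point together with a chosen gradient-like flow line from it to $\partial X'$. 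This directly enforces conditions (3) and (4) at the endpoints $f_0$ and $f_1$.

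The family $f_t$ is then built by concatenation. On each $X_{\text{stab}}$--summand, Theorem~\ref{s5ex} supplies a Morse family connecting $f_{\text{stab}}$ to $f_{\text{stab}}\circ \phi_{\text{stab}}$ with only index $0$ and $2$ critical points. On the $X$--summand I interpolate $f$ to $f\circ\phi$ by a generic Cerf family, which is Morse or birth/death but may contain critical points of index $3$ (and possibly $4$). To remove each such unwanted critical point, I use the extra $2$--handles from the $X_{\text{stab}}$--summands: I introduce a cancelling $2$--$3$ pair via a birth singularity in an unused stabilization summand, then handleslide the fresh $2$--handle over to cancel against the offending index $3$ critical point in a subsequent death. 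This is a parametric version of the Kirby-calculus stabilization principle, with the slides being provided by the handleslide decomposition of $\phi_{\text{stab}}$ guaranteed by Theorem~\ref{s5ex}.

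The hardest step is the last one: verifying that the parametric cancellation can be carried out while preserving conditions (1)--(4) simultaneously throughout the entire concatenated family, and choosing $k$ large enough that every index $3$ or $4$ critical point arising in the Cerf graphic has an available partner. Handling index $4$ critical points requires also pairing them with index $3$ handles that will themselves be cancelled downstream, which is why $k$ must be chosen after inspecting the Cerf family for $\phi$. Conceptually, however, each stabilization by $X_{\text{stab}}$ contributes fresh $2$--handles that can, via the handleslides realizing $\phi_{\text{stab}}$, be routed anywhere on the page to suppress higher-index critical points in the Morse homotopy without ever disturbing the fixed neighborhood of the index $0$ critical point and its flow line to the boundary.
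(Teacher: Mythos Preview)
Your proposal has a genuine gap at the crucial step of eliminating index~$3$ critical points. The mechanism you describe---introducing a cancelling $2$--$3$ birth pair in an $X_{\text{stab}}$ summand and then sliding the fresh $2$--handle over to kill the offending index~$3$ critical point---does not reduce the count of index~$3$ critical points: the pair you just created contributes its own $3$--handle, so after your ``death'' you have traded one index~$3$ critical point for another. You give no argument for why that new $3$--handle can subsequently be cancelled, and iterating the trick only moves the problem around. There is a second, related difficulty: a $2$--handle living in an $X_{\text{stab}}$ summand has its belt sphere in that summand, while the attaching $2$--sphere of the offending $3$--handle lives in the $X$ summand. Handle slides do not allow you to realize an arbitrary intersection pattern between these spheres; when $\pi_1(X)\neq 1$ there is a genuine obstruction (this is exactly why the paper invokes the Fenn--Rourke extension of Kirby's argument). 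Your sketch does not address this.

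The paper's route is quite different in character. It first turns the Morse functions upside down so that index~$3$ becomes index~$1$, then pushes the birth/death of the bottommost index~$1$ arc off the edges of the Cerf graphic so the critical point persists for all $t$. The point is that one can then find an explicit family of circles $C_t\subset X_t$ (built from the $1$--handle core and a parallel arc) on which to perform \emph{surgery on the page itself}; this surgery converts the index~$1$ critical point to an index~$2$ one and changes the page diffeomorphism type to $X\#(S^2\times S^2)$. The delicate part---carried out via Fenn--Rourke---is showing that the $C_t$ close up to a torus in the mapping torus, so that the monodromy extends. Only \emph{after} this surgery does the paper boundary-sum with one more $S^2\times S^2$ and invoke Theorem~\ref{s5ex} to recognize the result as an open book for $M\#S^5\cong M$. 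In other words, the $X_{\text{stab}}$ stabilization is used to repair the open book after a surgery dictated by the bad critical point, not as a source of spare $2$--handles to cancel against.
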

\begin{proof}
Let $f_0=f$ and $f_1=f\circ \phi$. Let $f_t$, $t\in[0,1]$, be a
generic family of functions connecting $f_0$ to $f_1$. It is well
known, see \cite{Cerf70}, that such a family has the following
structure. For all but finitely many $\{t_1,\ldots, t_k\}\subset(0,1)$
the functions $f$ are Morse functions whose critical points all have
distinct values. At each $t_i$ there are either precisely two critical
points with the same value or all critical points have distinct
values, but one of the critical points is a birth or death point.

The \dfn{Cerf graphic} for the family $\{f_t\}$ consists of the points
$(t,u)\in [0,1]\times \R$ where $u$ is a critical value of $f_t$. It
is now a standard argument using manipulations of the Cerf graphic,
see \cite{Cerf70, Kirby78}, that we can alter $f_t$ relative to $f_0$
and $f_1$ so that there are no births or deaths of index 0 or 4
critical points. Thus $f_t$ has a unique index 0 critical point for
all $t$. One can isotope $\phi$ and then the functions $\{f_t\}$,
relative to $f_0$ and $f_1$, so that the critical point and a 
gradient-like flow line is fixed for
all $t$; moreover, one can further isotope $\phi$ and then the
functions relative to $f_0$ and $f_1$ so that they all agree on a
neighborhood of the index 0 critical point and flow line.

We are left to show how to remove the index 3 critical points. We will
give a modification of Fenn and Rouke's extension \cite{FennRourke79}
of Kirby's arguments in \cite{Kirby78} to the non-simply connected
setting. We will be considering the family $X_t$ of cobordisms upside
down. That is we use the functions $\{-f_t\}$ so in order to eliminate
the index 3 critical points of the $\{f_t\}$ we will eliminate the
index 1 critical points of the $\{-f_t\}$.

We begin by identifying a product neighborhood $N=(\partial X)\times
[a,b]$ of $\partial X$  so that $\partial X= (\partial X)\times \{a\}$ and 
isotoping $\phi$ so that it is the identity map on $N$. We can also
assume that $f|_N\co (\partial X)\times [a,b]\to [a,b]$ is just the
projection map (and thus $-f_0$ and $-f_1$ will also be the projection
map when restricted to $N$). We will denote the copy of $N$ in $X_i$
by $N_i$, $i=0,1$.

Following Kirby we can modify $\{-f_t\}$ so that all the index 1
critical points occur as shown on the left of Figure~\ref{cerf}.
\begin{figure}[ht]
  \relabelbox \small {\centerline{\epsfbox{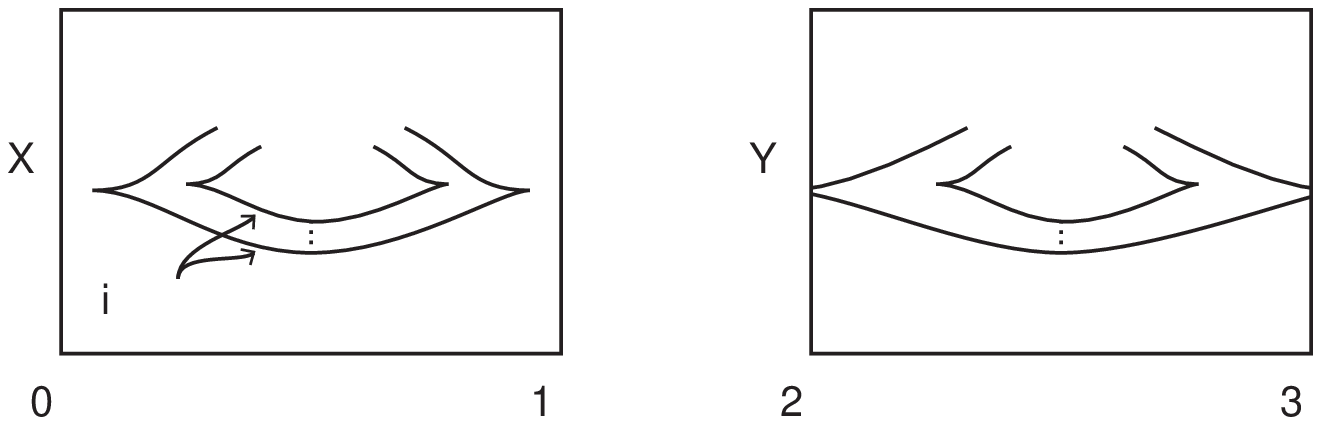}}} 
  \relabel{0}{$t=0$}
  \relabel {1}{$t=1$}
  \relabel{2}{$t=0$}
  \relabel {3}{$t=1$}
  \relabel{X}{$X$}
  \relabel{Y}{$X$}
  \relabel{i}{index 1 critical points}
  \endrelabelbox
        \caption{On the left, the birth and death of index 1 critical
points of $f_t$. On the right we have moved the birth death points for
the lowest index 1 critical point across the left and right boundary
of the Cerf diagram.}
        \label{cerf}
\end{figure}
Looking at the bottom most line of index 1 critical points we can
isotope $\{-f_t\}$ so that the birth and death points are pushed past
the edge of the Cerf graphic as shown on the right of
Figure~\ref{cerf}. We can assume that the canceling 1/2--handle pair in
$X_0$ and $X_1$ is contained a small 4--ball $B_0\subset N_0$ and
$B_1\subset N_1$, respectively, where $B_0=B_1$.

More precisely, we can assume that an index 1--handle, $h^1_i$ and
canceling index 2--handle, $h^2_i$ are attached to a 3--ball $\widetilde
B_i$ in $(\partial X)\times \{c\}\subset N_i$, for some $c\in(a,b)$
and $i=0,1$; and moreover $\widetilde{B}_0=\widetilde{B}_1$. The
attaching sphere for $h_0^2$ is a circle $A_0$ that is the union of
two arcs $a^0_1$ and $a^0_2$ with $a^0_1\subset \widetilde{B}_0$ and
$a^0_2$ contained in $h_0^1$.  Let $\gamma_0$ be an arc in
$\widetilde{B}_0$ that connects the two components of the attaching
region of $h^1_0$ and is parallel to $a_1$. There is an arc
$\gamma'_0$ in $h_0^1$ that is parallel to the core of $h_0^1$ and
completes $\gamma_0$ into a circle $C_0$. Notice that the 2--handle
$h_0^2$ shows that the circle $C_0$ is null-homotopic and in
particular bounds an embedded disk $D_0$.

Moving through the cobordisms $(X_t, -f_t)$ the attaching region of
the 1--handle changes by an isotopy in $(\partial X)\times \{c\}$
(since it is the bottom most 1--handle it does note slide over the
other 1--handles). This can be extended to an ambient isotopy of
$(\partial X)\times \{c\}$ and thus we may push $\gamma_0$ to a curve
$\gamma_1$ in $(\partial X)\times \{c\}\subset N_1$. We can further
extend this ambient isotopy to $X$ so that $D_0$ can be pushed to the
disk ${D}_1$ in $X_1$. In particular we get simple closed curves $C_t$
in $X_t$. Identifying $X$ with $X_0$ we let $X'$ be the result of
surgering $C_0$ in $X$. This will replace the 1--handle (that is index
1 critical point) with a 2--handle (that is index 2 critical point). We
can use the circles $C_t$ to surger all the $X_t$ simultaneously to
get $X'_t$ that fit together to form $X'\times[0,1]$. Moreover the
functions $-f'_t$ induced from the $-f_t$ and this surgery have one
less critical point of index 1. We claim that
\begin{enumerate}
\item $X'=X\#(S^2\times S^2)$ 
\item we may extend $\phi$ to a diffeomorphism $\phi'$ of $X'$ and
\item we may still assume that $f'_1=f'_0\circ \phi'$. 
\end{enumerate}
Once we establish these three claims we will see how to alter the pair
$(X',\phi')$ to an open book decomposition $(X'', \phi'')$ of $M$ with the 
desired properties.

We first observe that since $h_0^1$, $h_0^2$ and $\gamma_0$ are
attached in $B_0$ and so the surgery on $C_0$ with the appropriate
framing in $X_0$ clearly results in $X_0\# (S^2\times S^2)$.  Since we
can surger all the $C_t$ in $X_t$ (using the same framing) at the same
time and preserve the product structure (we could interpret this as
surgering the annulus that is formed as the union of the $C_t$ in
$X\times [0,1]$) we see that the surgered family of cobordisms
$X\times[0,1]$ becomes $X'\times [0,1]$ where $X'=X\# (S^2\times S^2)$
and thus establish Claim~(1). We note that we can extend the functions
$f_t$ restricted to the complement of the surgered regions to the
surgery regions to obtain new functions $f'_t$.

We now establish Claims~(2) and~(3) by analyzing the curve
$C_1$. Notice that $C_1$ runs over the 1--handle $h_1^1$ and no other
1--handles (again since we are considering the bottom most 1--handle
it does not slide over the other 1--handles). In particular
$C_1={\gamma}_1\cup \gamma_1'$, where $\gamma_1'$ is the analog of
$\gamma_0'$ that runs over the 1--handle $h_1^1$ in $X_1$.

The curve $C_1$ bounds the disk $D_1$ in $N_1=(\partial
X_1)\times[a,b]$ and so is null-homotopic.  The 2--handle $h_1^2$ is
attached along a curve $A_1=a_1^1\cup a_1^2$ where $a_1^1$ is in
$\widetilde{B}_1$ and $a_2^1$ is in the handle $h_1^1$. The arcs
$a_1^1$ and $\gamma_1$ can be connected in $(\partial X_1)\times
\{c\}$ to from a simple closed curve $c_1$. The 2--handle $h_1^2$ can
be used to show that $C_1$ and $c_1$ are isotopic and thus $c_1$
is null-homotopic in $(\partial X_1)\times \{c\}$ (since $(\partial
X_1)\times \{c\}$ is homotopy equivalent to $N_1$). We can use this
null-homotopy to isotope $\gamma_1$ in $(\partial X_1)\times \{c\}$ so
that it is parallel to $a_1^1$ (of course this isotopy may cross
$a_1^1$). The family of functions $-f_t$ for $t\in[0,1]$ can be
extended to a family for $t\in[0,2]$ that is constant for
$t\in[1,2]$. The isotopy above allows us to extend $C_t$, $t\in[0,1]$
to a family of curves $C_t$ in $X_t$ for $t\in[0,2]$ so that
$-f_2=-f_0\circ \phi$ and $C_0=\phi(C_2)$. Thus the $C_t$ form a
torus in the mapping torus of $\phi$ and we can perform surgery on
$C_t$ in $X_t$ simultaneously for all $t$. Reparameterizing the
interval $[0,2]$ establishes Claims~(2) and~(3).

Given $X_t'$, $-f_t'$ and $\phi'$ as above we construct $X''$ by
connect summing $X'$ with $S^2\times S^2$. More specifically we add
two 2--handles to $X'$ to obtain $X''=X'\# S^2\times S^2$. Since these
2--handles are attached to $\partial X'$ and $\phi'$ is the identity
map here we can clearly extend $\phi'$ over $X''$. Moreover we can
extend the $-f_t'$ over $X''$ to obtain Morse functions on $X_t''=X''$
that satisfy $f''_1=f_0''\circ \phi'$.

Recall that $X''=X\#Z$ where $Z=S^2\times S^2\# S^2\times S^2$. Using
Theorem~\ref{s5ex} we can perform handle slides on the $Z$ part of
$X''$, encoded by Morse functions $f''_t$, $t\in [1,2]$, to obtain a
diffeomorphism $\psi\co X''\to X''$ that is the identity away from $Z$
and gives the monodromy for $S^5$ described in the theorem. Thus
setting $\phi''=\phi'\circ \psi$ we have a diffeomorphism of $X''$ and
a family of Morse functions $f_t''$, $t\in [0,2]$, such that
$f''_2=f''_0\circ \phi''$. Moreover it is clear that $(X'',\phi'')$
describes the open book decomposition obtained from $(X,\phi)$ by
boundary summing with the open book from Theorem~\ref{s5ex}. That is
$(X'',\phi'')$ is an open book for $M\# S^5$ according to
Lemma~\ref{obdcs}. Moreover the Morse functions $f_t''$ have one less
3--handle than $f_t$. Thus continuing the above construction for each
3--handle in $f_t$ we eventually arrive at an open book and Morse
functions as described in the theorem.
\end{proof}

\section{Almost contact structures are homotopic to contact structures}
We are now ready to assemble the pieces discussed above to prove that
an almost contact structure on a 5--manifold can be homotoped to a
contact structure.

\begin{proof}[Proof of Theorem~\ref{main}]
Let $(\eta,J)$ be any almost contact structure on the closed oriented
5--manifold $M$.  Let $(Y,\pi)$ be an open book decomposition for $M$
with pages having no handles of index larger than
two. Theorem~\ref{stabilize} allows us to assume that there is a Morse
function $f$ on a page of $(Y,\pi)$ so that $f$ and $f\circ \phi$,
where $\phi$ is the monodromy of $(Y,\pi)$, can be connected by a
family of functions as described in the theorem.  By
Theorem~\ref{allacs} there is an overtwisted contact structure $\xi$
on $Y$ such that the hyperplane field $H(\xi)$ is homotopic to $\eta$
and using the homotopy we can think of $\eta$ as $H(\xi)$ and $J$ as a
complex structure on $H(\xi)$. Thus $J$ induces an almost complex
structures $J_\theta$ on the pages $X_\theta=\pi^{-1}(\theta), \,
\theta\in S^1,$ of $(Y,\pi)$.

We break $M$ into several parts. Let $N=Y\times D^2$ be a neighborhood
of the binding $Y$ in $M$, as in Theorem~\ref{allacs}. Let
$C=M\setminus N$ be the complement of the interior of $N$. We can
think of $\pi$ as a fibration $C\to S^1$ (that extends over $N-Y$ as
projection onto the $\theta$--coordinate of $D^2$). By a slight abuse
of notation we will refer to the fibers of $\pi\co C\to S^1$ as
$X_\theta$ and $J_\theta$ will be the almost complex structures on
$X_\theta$.  Letting $X=X_{\theta_0}$ for some fixed $\theta_0\in S^1$
we can write
\[
C=X\times [0,1]/(x,1)\sim(\phi(x), 0)
\]
for some diffeomorphism $\phi\co X\to X$ that is the identity near the
boundary of $X$. According to Theorem~\ref{stabilize} we can write $X$
as a union of $X'$ and $X''$ where $X'$ is a neighborhood of the fixed
index 0 critical point of $f\co X\to \R$ and $X''=X\setminus X'$ is the
complement of the interior of $X'$; moreover, $\phi$ is the identity
on $X'$. Thus we may decompose $C$ as
\[
C= C'\cup C'',
\]
where $C'=D^4\times S^1$ is the mapping torus of $\phi|_{X'}$ and
$C''$ is the mapping torus of $\phi|_{X''}$. 

Glancing at Steps~1 and~2 below will convince the reader that is is relatively 
easy to create a contact structure on $C$ (using Proposition~\ref{twist} for $C'$ and
Theorem~\ref{cethm2} for $C''$). However, it is not easy to extend this over
$N$. To achieve this extension we need to be more careful in Steps~1
and~2. For this reason we need to further decompose $X''$. More specifically, 
we will show below that there is a cylinder $W'=D^3\times [0,1]$ in
$X''$ that breaks $X''$ into two relative cobordisms $X''=W'\cup W$
where $W$ is a relative cobordism from $D^3$ to $Y\setminus B$ (where
$B$ is a ball in $Y$). Moreover $\phi$ is the identity on $X'$ and
$W'$, the complex structure $J$ restricted to $X'$ is standard and
restricted to $W'$ is compatible with (part of) the symplectization of
the standard contact structure on the 3--ball $D^3$. Moreover we have
a Morse function $f\co X''\to \R$ that satisfies the properties in
Theorem~\ref{stabilize} and has a gradient like vector field that is
non-zero on $W'$ and tangent to $\partial W'\cap \partial W$.

We begin by constructing a model situation. Let $B$ be the ball of
radius 1 in $\C^2$ with the standard complex structure. The complex
tangencies to $S^3=\partial B$ give the standard contact structure
$\xi_{std}$ on $S^3$. Let $D^3$ be a small ball in $S^3=\partial B$ on
which the contact structure is standard ({\em i.e.\ }a Darboux
ball). Gluing $D\times [1,2]$ to $B$ where $D\times\{1\}$ is
identified with $D\subset \partial B$ gives another ball $R$ and using
a portion of the symplectization of $(\xi_{std})_B$ we can extend the
standard symplectic structure and complex structure on $B$ to all of
$R$. Denote this complex structure $J'$

We now carefully construct the decomposition $C'\cup C''$ on
$C$. There is some neighborhood $U$ of $\partial X$ such that $\phi$
is the identity map on $U$ and hence each page $X_\theta$ has a
corresponding neighborhood $U_\theta$ that can be identified with
$U$. In particular, $U'=\cup_\theta U_\theta$ is a neighborhood of
$\partial C$ in $C$ and is diffeomorphic to $U\times S^1$.  We can
assume that $U\cong Y\times [0,2]$ and the construction in
Theorem~\ref{allacs} allows us to assume that under the
identifications of $U$ with $U_\theta$ the complex structure
$J_\theta$ is independent of $\theta$ on $U$. Again referring to the
proof of Theorem~\ref{allacs}, $Y\times\{t\}$ in each $U_\theta$ is
$J_\theta$ convex and the $J_\theta$ complex tangencies to
$Y\times\{t\}$ are $\xi$ (recall that $\xi$ is the overtwisted contact
structure on $Y$ such that $\eta=H(\xi)$).

Looking at the proof of Theorem~\ref{stabilize} we can assume the ball
$X'$ is a small neighborhood of a point in $U'$. Moreover we can take
an arc $\gamma$ from $\partial X'$ to $\partial X$ that is transverse
to all the $Y\times \{t\}$ in $U$.  Clearly there is a diffeomorphism
from $R$, constructed above, and $X'$ union a neighborhood of $\gamma$
so that $B$ maps to $X'$ and $D\times [1,2]$ maps to a neighborhood of
$\gamma$. We can also arrange that $D\times\{t\}$ maps to a ball in
$Y\times \{t\}$ and by (a parametric version of) Darboux's theorem the
$J'$--complex tangencies to $D\times\{t\}$ in $R$ map to the
$J$--complex tangencies to the ball in $Y\times\{t\}$. We can now
homotope, if necessary, the complex structure $J$ so that it agrees
with $J'$ on the image of $R$ and the complex tangencies to $Y\times
\{t\}$ are not changed for $t\in(1,2]$ (but we might change the
complex tangencies on $[0,1]$). Since $J$ is $\theta$--invariant in
$U'$ we can clearly arrange this on each page $X_\theta$.  We now set
$W=X'\setminus R$ and $W'=D\times [1,2]$ to get the desired
decomposition of $X''$ and notice that Theorem~\ref{stabilize} still
gives the desired Morse function.

\smallskip
\noindent
{\bf Step 1:} {\em The contact structure on $C'$.}
Proposition~\ref{twist} gives a contact structure $\zeta$ on
$C'=D^4\times S^1$ such that $\zeta$ induces the overtwisted contact
structure $\xi_{ot}$ on $\partial D^4\times \{\theta\}$ for each
$\theta\in S^1$.  The proposition also gives the following specific
form for the contact structure near $\partial C'$: if $\alpha_{ot}$ is
a 1--form for which $\xi_{ot}=\ker \alpha_{ot}$ then in a neighborhood
$S^3\times (1/2,1]\times S^1$ of the boundary of $D^4\times S^1$ the
contact structure is given by
\[
\zeta=\ker (Kd\theta + t\alpha_{ot}),
\]
where $t$ is the coordinate on $(1/2,1]$, $\theta$ is the coordinate
on $S^1$, and $K$ is any positive constant. Proposition~\ref{twist} 
allows us to arrange that $\ker(\alpha_{ot})$ is standard on the disk
$D^3$ where $W'$ intersects $\partial X'$. 

We note that $\xi_{ot}$ is homotopic to $\xi_{std}$ on $S^3$ relative
to the disk $D^3$ and thus there is a homotopy of the almost complex
structure on $(D^4=X')\cup X''$, relative to $W'$, so that $\zeta\cap
(\partial D^4\times \{\theta\})$ is the set of complex tangencies to
$\partial D^4=\partial_-X''$. By Proposition~\ref{twist}, $\zeta$ is
homotopic, through almost contact structures, to $\ker(Kd\theta+
\lambda)$, where $\lambda$ is a primitive for the standard symplectic
form on $D^4$. And this contact structure is homotopic to the almost
contact structure $\eta'$ coming from the tangencies to $B^4$ (with
the almost complex structure above).

\smallskip
\noindent
{\bf Step 2:} {\em Extending the contact structure to $C''$.} Recall that 
\[
C''= X''\times [0,1]/(x,1)\sim (\phi(x),0).
\]
In addition $X''=W'\cup W$ and we can think of $W'$ as part of the
symplectization of the standard contact structure on a 3--ball $D^3$,
that is $W'=D^3\times[1,2]$ and if $\alpha$ is a contact form for the
standard contact structure on $D^3$ then $d(t\alpha)$ is a symplectic
form on $W'$ and it is compatible with the almost complex structure
$J$. Thus we see that $W'\times S^1\subset C''$ already has a contact
structure given by $\ker(K\, d\theta+\alpha)$.  Note also that $(d
(t\alpha), v=\partial_t, f|_{W'})$ gives $W'$ the structure of a
relative Weinstein cobordism.

We are left to consider 
\[
W\times [0,1]/(x,1)\sim (\phi(x),0).
\]
We will denote $W\times\{t\}$ by $W_t$ and the almost complex
structure on $W_t$ by $J_t$.

Recall that $f$ and $f\circ \phi$ are connected by a family of
functions as in Theorem~\ref{stabilize}. Thinking of $f$ as a Morse
function on $W_0$ and $f\circ \phi$ as a Morse function on $W_1$ the
family of functions from Theorem~\ref{stabilize} can be thought of as
giving functions $f_t\co W_t\to \R$. (We are of course considering
the restriction of $f$ to $W$, but leave this out of the notation for 
convenience.) 

Recall that $J_t$ are all the same near the lower boundary $\partial_-
W_t$ so the complex tangencies on $S^3=\partial_-X_t$ give the
standard overtwisted contact structure $\xi_{ot}$. In addition all
the $J_t$ are the same near the vertical boundary $\partial_v W$. 
By extending $W$ into $W'$ and $X'$ slightly we can assume
that there is a neighborhood of $\partial_-W$ that has the structure of
a Weinstein cobordism and there is a neighborhood of $\partial_v W$
that has the structure of a Weinstein cobordism. 

Recall that for any almost complex structure the space of compatible
non-degenerate 2--forms is contractible. Thus we can use the almost
complex structures $J_t$ to find a path $\beta_t$ of non-degenerate
2--forms on $W_t$ such that $\beta_1=\phi^*\beta_0$.  Moreover we can
assume that $\beta_i$ agrees with the 2--forms defining the Weinstein
cobordism structures on the neighborhoods of $\partial_-W_t$ and
$\partial_v W_t$.

We now claim that there are vector fields $v_i$ on $W_i$, $i=0,1$ such
that $(\phi^{-1})_*v_1=v_0$ and we can assume that $(\beta_0, v_0,
f_0)$ and $(\beta_1,v_1,f_1)$ are flexible Weinstein structures on
$W_0$ and $W_1$, respectively.

To see this we first note that via homotopy we can assume that
$\beta_t$ and $f_t$ are fixed near $t=0$ and $t=1$. Now recall that
Theorem~\ref{wct2} says that there is a homotopy of $\beta_0$ to a
symplectic form $\omega$ and there exists a vector field $v$ such that
$(\omega, v, f_0)$ is a flexible relative Weinstein cobordism
structure on $W_0$.  We can now extend $W\times [0,1]$ to, say,
$W\times [-\epsilon, 1]$, for some $\epsilon>0$, and use the homotopy
of 2--forms between $\beta_0$ and $\omega$ to define non-degenerate
2--froms on $W_t$ for $t\in[-\epsilon, 0]$. We can also extend $f_t$
so that it is constant on $[-\epsilon, 0]$. Moreover pulling back the
homotopy of 2--forms via $\phi$ we can also extend our $\beta_t$ and
$f_t$ over $[1,1+\epsilon]$. We now have $(\beta_t,f_t)$ defined for
all $t\in [-\epsilon, 1+\epsilon]$ so that the structures at the
endpoints, together with $v$ and $\phi_*v$, define relative Weinstein
cobordism structures on $W$.  Moreover all the properties from
Theorem~\ref{stabilize} for the functions $f_t$ are unchanged and
$W\times[-\epsilon, 1+\epsilon]$ can be used to recover $C''$. Thus
re-parameterizing $[-\epsilon, 1+\epsilon]$ to $[0,1]$ establishes the
claim. (Notice that the homotopy of the 2--forms can be taken to be
fixed in a neighborhood of $\partial_-W\cup \partial_v W$.)

Noting that $f_t$, $t\in [0,1]$, is a Morse homotopy without critical
points of index greater than 2, we can now apply Cielieback and
Eliashberg's theorem, Theorem~\ref{cethm2} above, to get a path
$(\omega_t, v_t, f_t)$, $t\in[0,1]$, of Weinstein structures such that
$\omega_t$ is homotopic to $\beta_t$ relative to the end points and a
neighborhood of $\partial_-W$ and $\partial_v W$.  Now set
$\lambda_t=\iota_{v_t}\omega_t$ and consider the 1--form
\[
\alpha=Kdt + \lambda_t
\]
on $W$, where $K$ is some constant. Note
\[
\alpha\wedge d\alpha \wedge d\alpha= K\, dt\wedge d\lambda_t\wedge
d\lambda_t + 2\, dt\wedge \frac{d\lambda_t}{dt}\wedge \lambda_t\wedge
d\lambda_t.
\]
So for any $K$ sufficiently large, $\alpha$ defines a contact from on
$W\times [0,1]$ that extends the one on $W'\times [0,1]$ and clearly
descends to a contact from on $C''$.  Since we have a precise formula
for the contact form near $\partial_-X\times S^1\subset C''$ that
agrees with the one near $\partial C'\subset C'$ we can clearly extend
the contact form from Step 1 over $C''$ using $\alpha$. We denote the
associated contact structure $\zeta$. Moreover the contact structure
$\zeta$ is clearly homotopic through almost contact structures to the
tangents to the pages of the open book on $C''$ (just let the constant
$K$ go to infinity).

We wish to use Eliashberg's computation of the homotopy type of
overtwisted contact structures, Theorem~\ref{otclass}. To this end we
study the loop of contact structures $\xi_\theta=\ker(\lambda_\theta)$
on $Y$. First notice that the homotopy of 1--forms from the $\beta_t$,
compatible with the $J_t$, to the symplectic forms $\omega_t$, can be
covered by a homotopy of almost complex structures $(J_t)_s$,
$s\in[0,1]$. We can arrange
that the complex structures $(J_t)_1$ that are compatible with
$\omega_t$ also have $\xi_\theta$ as the set of complex tangencies to
$Y=\partial_+X''$.

The original almost complex structures on $X_\theta$ induced the
constant loop of plane fields $\xi$ on $Y$ as the complex tangencies
to $\partial_+X''_\theta=Y$. The homotopy of almost complex structures
on $(J_t)_s$ induces a homotopy of this loop to the loop
$\xi_\theta=\ker(\lambda_t)$. Moreover this loop is constant on the
ball $\partial_+W'\subset Y$. Thus $\xi_\theta$ is homotopic to a
constant loop of plane fields in the space $\mathcal{P}_p(Y)$ (for any
$p\in \partial_+W'$). (We are using the notation from
Section~\ref{sec:ot}.) To apply Theorem~\ref{otclass} we need to see
that we can assume that $\xi_\theta$ have a fixed overtwisted disk
through $p$.

\smallskip
\noindent
{\bf Step 3:} {\em Create a fixed overtwisted disk in each fiber of
$\partial C$.} We may identify a neighborhood $N''$ of $\partial C$ in
$C$ with $(1/2,1]\times Y\times S^1$. Under this identification the
contact form $\alpha$ can be written $Kd\theta + s \alpha_\theta$,
where $s$ is the coordinate on $(1/2,1]$, $\theta$ is the coordinate
on $S^1$ and $\alpha_\theta$ is a contact form on $Y$ for each
$\theta$. Moreover in $D^3=\partial_+W'\subset Y$ the contact forms
are independent of $\theta$ and induce the standard contact structure
on the ball. Let $S$ be a solid torus in $D^3$ that contains the point
$p$ discussed in the previous paragraph and let $T^2\times [0,1]$ be a
neighborhood of $\partial S$ that does not contain $p$. We use the
contact structure constructed in Lemma~\ref{lem:twist} to remove
$(1/2,1]\times T^2\times[0,1]\times S^1$ from $C$ and replace it with
the contact structure on $(1/2,1]\times T^2\times[0,1]\times S^1$
described in the lemma.  In particular we can assume that $S$ is a
``Lutz tube" and, more to the point, there is a disk $D$ in $D^3$ that
is overtwisted in the contact structures $\xi'_\theta$ induced on $Y$
thought of as the boundary of $X_\theta$ with this new contact
structure on $C$. Each $\xi'_\theta$ differs from $\xi_\theta$ by a
Lutz twist. And by the discussion in Section~\ref{sec:ot} we see that
the loop $\xi'_\theta$ is homotopic to $\xi_\theta$ in
$\mathcal{P}_p(Y)$. This completes Step 3. For convenience we rename
$\xi'_\theta$ to $\xi_\theta$.

\smallskip
\noindent
{\bf Step 4:} {\em Construct a contact structure on $N$.} From the
previous step we know that $\xi_\theta=\ker \alpha_\theta$ is a loop
of contact structures on $Y$ with a fixed overtwisted disk that is
contractible in $\mathcal{P}_p(Y)$. Thus according to Eliashberg's
result, Theorem~\ref{otclass}, the loop is contractible through
contact structures. That is for each $r\in [0,1]$ there is a loop
$\alpha_\theta^r$ of contact forms on $Y$ such that
$\alpha_\theta^1=\alpha_\theta$ and $\alpha_\theta^0$ is independent
of $\theta$. Moreover, we can assume that $\alpha_\theta^r$ is
independent of $r$ near 0 and near 1. Consider the 1--form
\[
\beta= f(r)\, d\theta + g(r)\, \alpha_\theta^{r} 
\]
on $Y\times D^2$. One can compute that
\[
\beta\wedge d\beta\wedge d\beta= g(f'g-g'f)\, dr\wedge
d\theta\wedge\alpha_\theta^{r}\wedge d\alpha_\theta^{r} - g^2\,
dr\wedge d\theta\wedge\frac{\partial\alpha_\theta^{r}}{\partial
r}\wedge \left(f\, d\alpha - g\,
\alpha\wedge\frac{\partial\alpha_\theta^{r}}{\partial \theta}\right).
\]
Where $\alpha_\theta^{r}$ is independent of $r$ the last term
vanishes. Thus as long as $g$ is positive and $(f'g-g'f)>0$, $\beta$
will be a contact from. Where $\alpha_\theta^{r}$ does depend on $r$
we will take $g$ to be constant and $f(r)=e^{cr}$. If $g$ is any fixed
constant and $c$ is chosen sufficiently large, $\beta$ will be a
contact form in this region too.

Specifically, let $0<r_1<r_2<1$ be constants such that
$\alpha_\theta^{r}$ is independent of $r$ outside $[r_1,r_2]$. Let
$r_3<r_4$ be any numbers in $(r_2, 1)$. We can choose $f(r)$ to be
$r^2$ near 0, $e^{cr}$ on $[r_1,r_2]$, strictly increasing on
$[0,r_4]$ and some large constant $K$ on $[r_4,1]$. We also choose
$g(r)$ to be 2 on $[0,r_3]$, strictly decreasing on $[r_3, 1]$ and
equal to $2-r$ near 1.

If $D_a$ is the disk of radius $a$ then we clearly see that $\beta$
defines a contact structure on $Y\times D_a$ for any $a>1$.

Using the coordinates $(1/2,1]\times Y\times S^1$ for a neighborhood
of $\partial C$ in $C$ from above, we can glue $Y\times D_a$ to $C$ by
a diffeomorphism
\[ 
\Psi\co  \{(p,r,\theta): 1<r<a\} \to  (1/2,1]\times Y\times S^1: (p,r,\theta) \mapsto (2-r,p, \theta).
\]
It is clear that $\Psi$ pulls the contact from from Step 2 back to the
one constructed in above.  Thus $\Psi$ can be used to glue the contact
structures on $N$ and $C$ together, thus extending $\zeta$ on $C$ over
$N$.

\smallskip
\noindent
{\bf Step 5:} {\em See that $\zeta$ is homotopic to $(\eta,J)$.}
Notice in the construction above $\zeta$ is clearly homotopic to
$(\eta,J)$ along a page of the open book $(Y,\pi)$. Thus, just as in
the proof of Theorem~\ref{allacs} we see that $(\eta,J)$ and $\zeta$
are homotopic as almost contact structures. (Recall this is simply
because almost contact structures on a 5--manifolds are determined up
to homotopy by their restriction to the 2--skeleton.)
\end{proof}

\begin{remark}
We note that in \cite{Giroux02} Giroux defines the notion of a contact
structure being \dfn{supported} by an open book. While we use open
books to construct our contact structures, none of the contact
structures coming from Theorem~\ref{main} are supported by the open
book used in the proof of the theorem. In particular, it is possible
that the given open book does not support any contact structures.
\end{remark}


\def\cprime{$'$} \def\cprime{$'$}

\end{document}